\newtheorem{theorem}{Theorem}
\newtheorem{lemma}{Lemma}
\newtheorem{corollary}{Corollary}
\theoremstyle{definition}
\newtheorem{remark}{Remark}
\renewcommand{\le}{\leqslant}
\renewcommand{\ge}{\geqslant}
\renewcommand{\leq}{\leqslant}
\renewcommand{\geq}{\geqslant}
\renewcommand{\emptyset}{\varnothing}
\newcommand{\ca}{\mathcal{A}}
\newcommand{\ce}{\mathcal{E}}
\newcommand{\tmod}{\ \mathsf{mod}\ }
\newcommand{\real}{\mathbb{R}}
\newcommand{\ints}{\mathbb{Z}}
\newcommand{\natu}{\mathbb{N}}
\newcommand{\caln}{\mathcal{N}}
\newcommand{\bsa}{\boldsymbol{a}}
\newcommand{\bsc}{\boldsymbol{c}}
\newcommand{\bsk}{\boldsymbol{k}}
\newcommand{\bsv}{\boldsymbol{v}}
\newcommand{\bsx}{\boldsymbol{x}}
\newcommand{\bst}{\boldsymbol{t}}
\newcommand{\bszero}{\boldsymbol{0}}
\newcommand{\bsone}{\boldsymbol{1}}
\newcommand{\bsM}{\boldsymbol{M}}
\newcommand{\bskappa}{\boldsymbol{\kappa}}
\newcommand{\rd}{\,\mathrm{d}}
\newcommand{\dunif}{\mathbb{U}}
\newcommand{\e}{\mathbb{E}}
\newcommand{\var}{\mathrm{Var}}
\newcommand{\rank}{\mathrm{rank}}
\newcommand{\row}{\mathrm{Row}}
\newcommand{\giv}{\!\mid\!}
\newcommand{\tran}{\mathsf{T}}
\newcommand{\walk}{\mathrm{wal}_k}
\newcommand{\walbsk}{\mathrm{wal}_{\bsk}}
\newcommand{\ch}{\mathcal{H}}
\newcommand{\hk}{\mathrm{HK}}
\newcommand{\re}{\mathrm{Re}}
\newcommand{\supp}{\boldsymbol{s}}
\newcommand{\newconstant}{D}
\begin{document}

\title{Super-polynomial accuracy of multidimensional randomized nets
using the median-of-means}
\author{Zexin Pan \\ Stanford University
        \and    Art B. Owen \\ Stanford University}

\date{August 2022}
\maketitle
\begin{abstract}
We study approximate integration of a
function $f$ over $[0,1]^s$
based on taking the median of $2r-1$ integral
estimates derived from independently
randomized $(t,m,s)$-nets in base $2$.  The nets
are randomized by Matousek's random linear scramble
with a digital shift.  If $f$ is analytic over
$[0,1]^s$, then the probability that any one randomized
net's estimate has an error larger
than $2^{-cm^2/s}$ times a quantity depending
on $f$ is $O(1/\sqrt{m})$ for any
$c<3\log(2)/\pi^2\approx 0.21$.
As a result the median of the distribution of
these scrambled nets has an error that is
$O(n^{-c\log(n)/s})$ for $n=2^m$ function evaluations.
The sample median of $2r-1$ independent draws attains
this rate too, so long as $r/m^2$ is bounded away from
zero as $m\to\infty$. We include results for finite
precision estimates and some non-asymptotic comparisons
to taking the mean of $2r-1$ independent draws.
\end{abstract}

\section{Introduction}

In this paper we study a median-of-means algorithm
for multidimensional randomized quasi-Monte Carlo (RQMC) sampling
over $[0,1]^s$ for $s\ge1$.  
The problem in RQMC is to estimate
$\mu =\int_{[0,1]^s}f(\bsx)\rd\bsx$.
The familiar Monte Carlo estimate is the
mean $\hat\mu$ of $f(\bsx_i)$ for $n$ independent
$\bsx_i\sim\dunif[0,1]^s$, with a root
mean squared error (RMSE) of $O(n^{-1/2})$
when $f$ has finite variance.
A quasi-Monte Carlo (QMC) estimate \cite{nied92} 
replaces those
$n$ points by deterministic points 
strategically chosen to more
uniformly sample the unit cube \cite{dick:pill:2010}.
The resulting absolute error is $O(n^{-1+\epsilon})$
for any $\epsilon>0$ when $f$ has finite total
variation in the sense of Hardy and Krause.
Randomizing those points \cite{rtms} 
in such a way that they remain digital nets
provides independent unbiased estimates of $\mu$
allowing one to estimate accuracy statistically.
For smooth enough $f$, the randomization also
improves the RMSE to $O(n^{-3/2+\epsilon})$
\cite{smoovar}.

The usual way to combine independent replicates of
randomized digital nets is to simply take the
average of the replicate estimates.  The method
we study here is to instead take the median
estimate from $2r-1$ independent replicates
when using the random linear 
scramble from \cite{mato:1998}.

In \cite{superpolyone} we studied the case $s=1$.
The median-of-means proposal in \cite{superpolyone}
uses a $(0,m,1)$-net in base $2$ randomized
with a random linear scramble of Matousek \cite{mato:1998}
and a digital shift.
For $f$ analytic on $[0,1]$ with integral $\mu$
estimated by an infinite precision RQMC
estimator denoted by $\hat\mu_\infty$ we saw
that the median of the randomization distribution of
$\hat\mu_\infty-\mu$ converges to $0$
as $O(n^{-c\log_2(n)})$ for any $c<3\log(2)/\pi^2\approx 0.21$.
That same rate could be attained by the sample median
of $2r-1$ independently replicated RQMC estimates so 
long as $r=\Omega(m)$ by
which we mean $m=O(r)$ as both $r$ and $m$ 
go to infinity.
That paper also considered integrands whose
$\alpha$ derivative satisfied a $\lambda$-H\"older
condition and found an error of
$O(n^{-\alpha-\lambda+\epsilon})$ for that case.
The significance of this result is that we can
attain a better rate than the customary
mean of replicated
estimates and that rate 
can adapt to an unknown smoothness 
level of the integrand without the user
having to know the smoothness level.  Indeed when
many integrals are computed from the same inputs
we might know that they have different smoothness
levels.

The previous paper was limited to $s=1$, where
there are many other good ways to integrate
a smooth function over $[0,1]$, as in \cite{davrab}.
That paper did however include a numerical 
result for 
the OTL circuit function on $[0,1]^6$ 
from \cite{surj:bing:2013}.
There the standard deviation of  a median
of means estimator was superior to that of the usual
mean-of-means at practically relevant sample sizes.
In the present paper we consider analytic functions
$f:[0,1]^s\to\real$.  We find that the median
value of $\hat\mu_\infty-\mu$ is now
$O(n^{-c\log_2(n)/s})$ for any $c<3\log(2)/\pi^2.$
In other words, there is still superlinear
convergence but with a dimension effect.

An outline of this paper is as follows.
Section~\ref{sec:background} introduces some
notation as well as the integration problem
and scrambling algorithms.
Section~\ref{sec:decomp} decomposes the
RQMC error into a sum over nonzero vectors
of $s$ nonnegative integers.  It is a sum 
of a randomly selected set of randomly 
signed Walsh coefficients.
That section introduces some notation that
we need to describe the complexity of the
Walsh basis functions and then presents
an upper bound on Walsh coefficients from Yoshiki \cite{yosh:2017}.
Section~\ref{sec:asymptotic}
gives asymptotic properties of the median
of means estimator. It bounds
the probability that a Walsh coefficient
contributes to the error and it shows
that the probability of an integration error
above $2^{-\lambda m^2/s+O(m\log(m))}$
is $O(1/\sqrt{m})$ when scrambling
a $(t,m,s)$-net in base $2$.
It also shows superpolynomial convergence
for some finite precision estimates
where the number of bits in the sample
values grows faster than a certain
multiple of $m^2/s$ and the median
of $\Omega(m^2)$ independent copies is used.
Section~\ref{sec:finitesample} 
looks at finite sample performance of the method
and gives conditions where a median-of-means
can outperform a mean-of-means for large $s$
and feasible $m$, despite the dimension
effect. This may happen when the
integrand is dominated by contributions from
a small set of important variables.
Section~\ref{sec:discussion} has a
discussion of the results focusing on
two remaining challenges: adaptation
to unknown smoothness, and quantifying uncertainty.
The median-of-means setting makes use of
techniques from analytic combinatorics that have
previously seen very little use in quasi-Monte
Carlo.  That literature has quite different
methods and notational conventions, and the
results we derive with it are in an Appendix.

We close the introduction with some bibliographic
remarks on median-of-means.  It is a longstanding
method in theoretical computer science. See
\cite{jerr:vali:vazi:1986} and \cite{lecu:lera:2020}
for some old and new uses, respectively.
Several uses in information based complexity are
discussed in \cite{kunsch2019solvable}.
Uses in quasi-Monte Carlo include \cite{superpolyone}
metioned above as well as 
\cite{hofstadler2022consistency} for some laws
of large numbers, \cite{goda:lecu:2022:tr}
for some smoothness adaptive lattice rules and
\cite{gobe:lera:meti:2022:tr} for robust RQMC
estimates.

\section{Notation and background}\label{sec:background}

We use $\natu$ for the natural numbers,
$\natu_0=\natu\cup\{0\}$
and $\ints_n=\{0,1,\dots,n-1\}$ for integers $n\ge2$. 
For $K\subset\natu$ we use $|K|$ for its cardinality.
We use $\caln=\{K \subset\natu\mid |K|<\infty\}$.
For $K\in\caln$ we use $\lceil K\rceil$ to
denote the largest element of $K$ with 
$\lceil\emptyset\rceil=0$ by convention.
When $x\in\real$ we use 
$\lceil x\rceil$ for the smallest integer greater than
or equal to $x$. The context will make it clear
whether the argument to $\lceil \cdot\rceil$ is
a real number or a set of natural numbers.

We let $\bszero$ be a vector of $m$ zeros
and we set $\natu^s_*=\natu_0^s\setminus\{\bszero\}$.
We abuse notation slightly by letting $\bszero$ be
either a row or a column vector as needed.
For $s\in \natu$ and $f\in L^2[0,1]^s$ we study approximation
of 
$$
\mu=\int_{[0,1]^s}f(\bsx)\rd\bsx
\quad
\text{by}
\quad
\hat\mu=\frac1n\sum_{i=0}^{n-1}f(\bsx_i)
$$
for $n\ge1$ and $\bsx_i\in[0,1]^d$.
We use $1{:}s=\{1,2,\dots,s\}$ for the set
of input indices to $f$.
When $v\subseteq1{:}s$ we use 
$-v$ for $1{:}s\setminus v$.




We use a van der Corput style mapping
between natural numbers and bit vectors and
points in $[0,1)$ as follows.
For $i\in\ints_{2^m}$ we let $\vec{i} = \vec{i}[m] = (i_1,i_2,\dots,i_m)^\tran\in\{0,1\}^m$
where $i = \sum_{\ell=1}^mi_\ell2^{\ell-1}$.
For $a=\sum_{\ell=1}^m a_\ell2^{-\ell}\in[0,1)$ we let
$\vec{a} = \vec{a}[E] = (a_1,a_2,\dots,a_E)^\tran$.
Here $E$ is the precision of $\vec{a}$
and we typically have $E\ge m$ in our use cases. 
For $a$ with two binary expansions we choose the one ending
in infinitely many 0s. 
For each $\vec{a}$ there is a unique 
$a\in[0,1)$. When $E<\infty$, we can
have $\vec{a}=\vec{a\,}'$ for $a\ne a'$.

For an integer base $b\ge2$ and vectors $\bsk,\bsc\in\natu_0^s$ with $0\le c_j<b^{k_j}$,
an elementary interval in base $b$ is a Cartesian
product of the form
$$
\prod_{j=1}^s\Bigl[
\frac{c_j}{b^{k_j}},\frac{c_j+1}{b^{k_j}}\Bigr).
$$
For integers $m\ge t\ge0$, the points $\bsx_0,\dots,\bsx_{b^m-1}\in[0,1)^s$
form a $(t,m,s)$-net in base $b$ if every
elementary interval with $\sum_{j=1}^sk_j=m-t$
contains precisely $b^t$ of those points.
Here, $t$ is the quality parameter of the net
with smaller values being better.  It is not
always possible to get $t=0$ for a given
choice of $b$ and $m$ and $s$.
The infinite sequence $\bsx_i\in[0,1)^s$ for $i\in\natu_0$ forms a $(t,s)$-sequence in base
$b$ if for all integers $m\ge t$ and $r\ge0$, 
the points $\bsx_{rb^m},\dots,\bsx_{(r+1)b^m-1}$
form a $(t,m,s)$-net in base $b$.
In this paper we consider $b=2$. This includes
the most widely used nets of Sobol' \cite{sobo:1967}
as well
as those of Niederreiter and Xing \cite{niedxing96} that have
some of the best available $t$ values.

Base $2$ digital nets of $n=2^m$ points are formed by setting
\begin{align}\label{eq:defveca}
\vec{a}_{ij} = C_j\vec{i}\ \tmod 2
\end{align}
for $0\le i<2^m$ and $j=1,\dots,s$
for carefully chosen generator matrices 
$C_j =C_j[E]\in\{0,1\}^{E\times m}$
where $E\ge m$ is a precision.
Our theoretical
analysis emphasizes $E=\infty$.
The attained value of $t$ is a property
of the chosen generator matrices.
We always assume that $C_j$ has full rank
over $\ints_2$.
The points $\bsa_i\in[0,1)^s$ have components
$a_{ij}$ determined by $\vec{a}_{ij}$ from equation~\eqref{eq:defveca}.  That is, we give
expressions for $\vec{a}_{ij}$ with the
understanding that $a_{ij}\in[0,1)
=\sum_{\ell=1}^E2^{-\ell}a_{ij\ell}$
when $\vec{a}_{ij} = (a_{ij1},a_{ij2},\dots,a_{ijE})$.

For a base $2$ digital $(t,s)$-sequence one uses generator
matrices with infinitely many rows and columns.
For $m\ge t$, the first $n=2^m$ points of such a sequence
are a $(t,m,s)$-net in base $2$.
When we consider a digital sequence we suppose
that for each finite $m$ we are working with
$C_j$ equal to the upper left $E\times m$ submatrix
of the infinite generator matrix. Note that any entries
in $\vec{i}$ after the $m$'th are zero for $i\in\ints_{2^m}$
so columns of $C_j$ after the $m$'th do not affect $\hat\mu_n$.

Given points $\bsa_i = (a_{i1},\dots,a_{is})\in[0,1)^s$ 
of a digital net, we
define linearly scrambled points as follows.
For precision $E\ge m$ we choose random matrices $M_j\in\{0,1\}^{E\times m}$
and random vectors $D_j \in\{0,1\}^{E}$
and take
\begin{equation}\label{eqn:xequalMCiplusD}
    \vec{x}_{ij}=\vec{x}_{ij}[E]=\vec{a}_{ij}+\vec{D}_j=M_jC_j\vec{i}+\vec{D}_j
\ \tmod 2
\end{equation}
for $0\le i<2^m$ and $1\le j\le s$ to define $\bsx_i\in[0,1]^s$.
From here on, arithmetic operations on bit vectors are 
taken modulo two unless otherwise indicated.
Our estimate of $\mu$ is now
$$
\hat\mu = \hat\mu_E = \frac1n\sum_{i=0}^{n-1}f(\bsx_i).
$$
For $E'<E$ we define $\hat\mu_{E'}$ as above
keeping only the first $E'$ rows of $M_j$ and
the first $E'$ entries in $D_j.$ Our reduced
precision estimate $\hat\mu_{E'}$ uses
$\vec{x}_{ij}[E']=M_j(1{:}E',:)C_j\vec{i}+\vec{D}_j(1{:}E')$.

\begin{lemma}\label{lem:precisionerror}
Let $f:[0,1]^s\to\real$ have modulus of continuity $\omega_f$.
Let $M_j$ and $D_j$
for $j=1,\dots,s$ be defined with 
infinite precision. Then
$$|\hat\mu_\infty-\hat\mu_E|\leq \omega_f\Bigl(\frac{\sqrt{s}}{2^E}\Bigr).$$
\end{lemma}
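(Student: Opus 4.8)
The plan is to compare the two estimators point-by-point, bounding $|f(\bsx_{ij}[\infty]) - f(\bsx_{ij}[E])|$ for each sample $i$ by controlling how far $\bsx_i[E]$ can be from $\bsx_i[\infty]$ in each coordinate. Since $\hat\mu_\infty$ and $\hat\mu_E$ are both averages of $n$ function values at points that agree in their first $E$ bits of each coordinate, the triangle inequality reduces the problem to a uniform bound on $|f(\bsx_i[\infty]) - f(\bsx_i[E])|$.

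First I would observe that, from \eqref{eqn:xequalMCiplusD}, the infinite-precision point $\bsx_{ij}[\infty]$ and the truncated point $\bsx_{ij}[E]$ have identical first $E$ binary digits in coordinate $j$: discarding rows $E+1, E+2, \dots$ of $M_j$ and entries $E+1,\dots$ of $D_j$ only affects digits beyond the $E$th. Hence $|x_{ij}[\infty] - x_{ij}[E]| \le 2^{-E}$ for every $i$ and every $j$ (the difference is a real number in $[0,1)$ whose first $E$ bits vanish). Consequently the Euclidean distance $\|\bsx_i[\infty] - \bsx_i[E]\|_2 \le \sqrt{s}\,2^{-E}$, which holds simultaneously for all $i$ and all realizations of the $M_j, D_j$.

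Next I would invoke the definition of the modulus of continuity: $|f(\bsu) - f(\bsv)| \le \omega_f(\|\bsu - \bsv\|_2)$ whenever $\bsu, \bsv \in [0,1]^s$. Applying this with $\bsu = \bsx_i[\infty]$ and $\bsv = \bsx_i[E]$, and using that $\omega_f$ is nondecreasing together with the distance bound above, gives $|f(\bsx_i[\infty]) - f(\bsx_i[E])| \le \omega_f(\sqrt{s}/2^E)$ for each $i$. Averaging over $i = 0, \dots, n-1$ and applying the triangle inequality to $\hat\mu_\infty - \hat\mu_E = \frac1n\sum_i \big(f(\bsx_i[\infty]) - f(\bsx_i[E])\big)$ yields the claimed bound $|\hat\mu_\infty - \hat\mu_E| \le \omega_f(\sqrt{s}/2^E)$.

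There is no real obstacle here; the only point requiring a little care is the coordinatewise digit-agreement claim, namely that truncating $M_j$ and $D_j$ to $E$ rows genuinely leaves the first $E$ digits of $x_{ij}$ unchanged rather than perturbing them through carries. This is immediate because the arithmetic in \eqref{eqn:xequalMCiplusD} is performed digitwise modulo $2$ with no carrying, so digit $\ell$ of $x_{ij}$ depends only on row $\ell$ of $M_j C_j \vec{i}$ and entry $\ell$ of $D_j$. One should also note that the two expansions could in principle differ in the edge case where $x_{ij}[E]$ has a terminating binary expansion that the convention resolves differently, but this changes the real value by at most $2^{-E}$ as well, so the distance bound is unaffected.
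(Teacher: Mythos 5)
Your proposal is correct and follows essentially the same route as the paper: a coordinatewise bound $|x_{ij}[\infty]-x_{ij}[E]|\le 2^{-E}$, hence $\Vert\bsx_i[\infty]-\bsx_i[E]\Vert_2\le\sqrt{s}\,2^{-E}$, then the modulus of continuity and the triangle inequality over the average. The only difference is that the paper cites Lemma 1 of \cite{superpolyone} for the coordinatewise bound, whereas you prove it directly via the carry-free digitwise mod-2 arithmetic, which is a valid (and self-contained) substitute.
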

\begin{proof} Let $\bsx_i[E]$ be $\bsx_i$ under scrambling with
  precision $E$ and $\bsx_i[\infty]$ be $\bsx_i$ under scrambling in
  the infinite precision limit. By Lemma 1 of \cite{superpolyone}, each coordinate of $\bsx_i[E]$ differs from $\bsx_i[\infty]$ by at most $2^{-E}$. Therefore 
$\Vert\bsx_i[E]-\bsx_i[\infty]\Vert_2\leq \sqrt{s}2^{-E}$ and so
$$|\mu_\infty-\mu_E|\leq \frac{1}{n}\sum_{i=0}^{n-1} |f(\bsx_i[E])-f(\bsx_i[\infty])|\leq \omega_f\Bigl(\frac{\sqrt{s}}{2^E}\Bigr).
\qedhere$$
\end{proof}

We will use $\omega_f(\sqrt{s})$ as 
shorthand for
$\sup_{\bsx\in [0,1]^s}f(\bsx)-\inf_{\bsx\in[0,1]^s}f(\bsx)$.

We focus on the random linear scrambling of \cite{mato:1998}.
The matrix $M_j\in\{0,1\}^{E\times s}$
is lower triangular with ones on the diagonal
and independent $\dunif\{0,1\}$ entries
below the diagonal.  The digital shift
has independent $\dunif\{0,1\}$ elements.
That is
$$M_{j,\ell\ell'}
=\begin{cases}
0, & 1\le \ell <\ell'\le m\\
1, & 1\le \ell =\ell'\le m\\
\dunif\{0,1\}, &\text{else,}
\end{cases}
$$
and $D_{j,\ell}=\dunif\{0,1\}$
for $\ell=1,\dots,E$.
We sketch this setting for $m=3$ and $E=4$
as follows:
\begin{align}\label{eq:thesketch}
M_j = 
\begin{pmatrix}
1 &\\
u & 1 \\
u & u & 1\\
u & u & u 
\end{pmatrix}
\quad\text{and}\quad D_j = \begin{pmatrix}
u\\
u\\
u\\
u
\end{pmatrix},
\end{align}
with $u$ representing random elements.
All of the uniform random variables in
$M_1,\dots,M_s$ and $D_1,\dots,D_s$ are
independent.

\section{Error decomposition}\label{sec:decomp}

In order to analyze the convergence rate of median-of-means, we first derive an error decomposition formula for $\hat{\mu}_{\infty}-\mu$ using Walsh functions.
For $k\in \natu_0$ and $x\in [0,1)$, we define 
\begin{align}\label{eq:defwalk}
\walk(x)=(-1)^{\vec{k{}}^\tran \vec{x}}.
\end{align}
Because $k$ is a finite integer, only finitely many
entries in $\vec{k}$ are nonzero and so the inner product
in~\eqref{eq:defwalk} is a finite
sum. For the multivariate generalization, the $\bsk$'th dyadic Walsh
function $\walbsk(\bsx)$ for $\bsk\in\natu_0^s$ is defined to be
\begin{equation}\label{eqn:Walshfunction}
    \walbsk(\bsx)=\prod_{j=1}^s \mathrm{wal}_{k_j}(x_j)=(-1)^{\sum_{j=1}^s\vec{k{}}_j^{\tran} \vec{x}_j}.
\end{equation}

It is known that $\{\walbsk(\bsx)\mid \bsk\in \natu_0^s\}$ form a
complete orthonormal basis of $L^2[0,1)^s$
\cite{dick:pill:2010}. Therefore for $f\in L^2[0,1)^s$
\begin{align}\label{eqn:Walshdecomposition}
   f(\bsx) &=\sum_{\bsk\in \natu_0^s}\hat{f}(\bsk) \walbsk(\bsx),
\quad\text{where}\\
     \hat{f}(\bsk)&=\int_{[0,1)^s}f(\bsx)\walbsk(\bsx)\rd\bsx.
\label{eqn:Walshcoefficient}
\end{align}
Equation~\eqref{eqn:Walshdecomposition} holds in a mean square sense.

\begin{theorem}\label{thm:decomp}
Let $f\in L^2[0,1)^s$ and let $\bsx_i$ be defined
by~\eqref{eqn:xequalMCiplusD} for $0\le i<2^m$.
Then
\begin{equation}\label{eqn:errordecomposition}
    \hat{\mu}_{\infty}-\mu=\sum_{\bsk\in \natu_*^s}\bsone\Biggl\{\sum_{j=1}^s \vec{k{}}_j^\tran M_j C_j=\bszero\Biggr\}\hat{f}(\bsk) (-1)^{\sum_{j=1}^s \vec{k{}}_j^\tran \vec{D}_{j}}.
\end{equation}
\end{theorem}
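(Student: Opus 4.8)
The plan is to expand $\hat\mu_\infty - \mu$ using the Walsh decomposition of $f$ and then evaluate the resulting average of Walsh functions over the scrambled net. First I would write $\mu = \hat f(\bszero)$ and, using \eqref{eqn:Walshdecomposition}, write $f(\bsx_i) = \sum_{\bsk\in\natu_0^s}\hat f(\bsk)\walbsk(\bsx_i)$, so that
\begin{equation*}
\hat\mu_\infty - \mu = \frac1n\sum_{i=0}^{n-1}\sum_{\bsk\in\natu_*^s}\hat f(\bsk)\walbsk(\bsx_i)
= \sum_{\bsk\in\natu_*^s}\hat f(\bsk)\,\frac1n\sum_{i=0}^{n-1}\walbsk(\bsx_i),
\end{equation*}
the interchange of sums being the point that needs care (see below). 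The task then reduces to showing that for each fixed $\bsk\in\natu_*^s$,
\begin{equation*}
\frac1n\sum_{i=0}^{n-1}\walbsk(\bsx_i) = \one\Bigl\{\textstyle\sum_{j=1}^s\vec k_j^\tran M_jC_j = \bszero\Bigr\}(-1)^{\sum_{j=1}^s\vec k_j^\tran\vec D_j}.
\end{equation*}

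Second, I would compute this inner average directly from the definition of the scrambled points. From \eqref{eqn:Walshfunction} and \eqref{eqn:xequalMCiplusD}, $\walbsk(\bsx_i) = (-1)^{\sum_j \vec k_j^\tran \vec x_{ij}} = (-1)^{\sum_j \vec k_j^\tran(M_jC_j\vec i + \vec D_j)}$. The digital-shift part $(-1)^{\sum_j \vec k_j^\tran \vec D_j}$ does not depend on $i$ and factors out. What remains is $\frac1n\sum_{i=0}^{n-1}(-1)^{\langle \bsv,\vec i\rangle}$ where $\bsv = \sum_{j=1}^s \vec k_j^\tran M_jC_j \in \ftwo^m$ is a fixed row vector (only the first $m$ columns of each $C_j$ matter, as noted in the excerpt, so truncating $\vec k_j$ and the matrices to the relevant finite size is legitimate). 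As $i$ ranges over $\ints_{2^m}$, $\vec i$ ranges over all of $\ftwo^m$, so this is a character sum over the group $\ftwo^m$: it equals $1$ if $\bsv = \bszero$ and $0$ otherwise. That is exactly the indicator in \eqref{eqn:errordecomposition}, and combining with the shift factor gives the claim for each $\bsk$.

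Third, I would address the interchange of summation, which I expect to be the main obstacle since \eqref{eqn:Walshdecomposition} holds only in $L^2$ and the Walsh series of $f$ need not converge pointwise. The clean route is to avoid pointwise evaluation of the series altogether: note that $\hat\mu_\infty$ is a bounded linear functional on $L^2[0,1)^s$, namely $f\mapsto \frac1n\sum_i f(\bsx_i)$ is continuous on $L^2$ only after we condition on the randomization — more precisely, fix the realized $M_j,D_j$, hence the points $\bsx_i$; then both sides of \eqref{eqn:errordecomposition} are bounded linear functionals of $f\in L^2$. They agree on the orthonormal basis $\{\walbsk\}$ by the computation above (for $\bsk = \bszero$ both sides integrate to give $\mu$ matched against $\mu$, i.e. contribute $0$ to $\hat\mu_\infty-\mu$; for $\bsk\in\natu_*^s$ we just verified equality of the coefficient functionals). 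Actually the subtlety is that $f\mapsto\frac1n\sum_i f(\bsx_i)$ is \emph{not} $L^2$-continuous for fixed points. The correct fix is a standard RQMC argument: take $L^2$ limits of Walsh partial sums, use that the Matousek scramble with digital shift makes each $\bsx_i$ marginally uniform so that $\e[\,|f(\bsx_i)-f_N(\bsx_i)|\,]\to0$ along partial sums $f_N$, and pass to the limit in $L^1(\Omega)$; equality \eqref{eqn:errordecomposition} then holds almost surely because both sides are $L^1(\Omega)$-limits of the truncated identities, which hold termwise by the finite computation. Alternatively, and most simply, one may just assume $f$ has an absolutely convergent Walsh series (true for the analytic $f$ that the paper ultimately cares about, by Yoshiki's coefficient bounds invoked later), in which case the interchange is immediate by dominated convergence and no measure-theoretic care is needed. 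I would present the absolutely-convergent case in the main text and relegate the general $L^2$ justification to a remark.
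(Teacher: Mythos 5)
Your proposal is correct and follows essentially the same route as the paper: expand $f$ in its Walsh series, note $\mu=\hat f(\bszero)$, factor out the digital-shift sign $(-1)^{\sum_j \vec k_j^\tran \vec D_j}$, and evaluate $\frac1n\sum_i(-1)^{(\sum_j \vec k_j^\tran M_jC_j)\vec i}$ as a character sum over $\ftwo^m$ giving the indicator. Your third step, justifying the interchange of summation (absolutely convergent Walsh series, or an $L^1(\Omega)$ limiting argument), is more careful than the paper, which performs the interchange without comment; this is a welcome refinement rather than a different approach.
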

\begin{proof} From equation~\eqref{eqn:Walshcoefficient} we see that $\mu=\hat{f}(\bszero)$. So by equations~\eqref{eqn:Walshfunction} and \eqref{eqn:Walshdecomposition},
$$\hat{\mu}_{\infty}-\mu=\sum_{\bsk\in \natu_*^s}\hat{f}(\bsk)\frac{1}{n}\sum_{i=0}^{n-1} \walbsk(\bsx_i)=\sum_{\bsk\in \natu_*^s}\hat{f}(\bsk)\frac{1}{n}\sum_{i=0}^{n-1} (-1)^{\sum_{j=1}^s\vec{k{}}_{j}^\tran \vec{x}_{ij}}.$$
From equation~\eqref{eqn:xequalMCiplusD}, we have
\begin{align*}
    \frac{1}{n}\sum_{i=0}^{n-1} (-1)^{\sum_{j=1}^s\vec{k{}}_{j}^\tran \vec{x}_{ij}}&=\frac{1}{n}\sum_{i=0}^{n-1} (-1)^{\sum_{j=1}^s\vec{k{}}_{j}^\tran (M_j C_j\vec{i}+\vec{D}_j)}\\
    &=(-1)^{\sum_{j=1}^s \vec{k{}}_j^\tran \vec{D}_{j}} \frac{1}{n}\sum_{i=0}^{n-1} (-1)^{\sum_{j=1}^s\vec{k{}}_{j}^\tran M_j C_j\vec{i}}\\
    &=(-1)^{\sum_{j=1}^s \vec{k{}}_j^\tran \vec{D}_{j}}\bsone\Bigl\{\sum_{j=1}^s \vec{k{}}_j^\tran M_j C_j=\bszero\Bigl\}
\end{align*}
so the conclusion follows.
\end{proof}

We need to quantify several
properties of Walsh function indices
$k$ and $\bsk$. 
Let $k\in\natu_0$ have binary expansion $k=\sum_{\ell=1}^\infty b_\ell 2^{\ell-1}$
for bits $b_\ell\in\{0,1\}$.
First we let
\begin{align}\vec{k} &:= \vec{k}[\infty] = (b_1,b_2,\dots)^\tran,\quad\text{and}\\
\kappa & := \{ \ell\in\natu \mid b_\ell=1\}.
\end{align}
We will study $\walk$ using the cardinality of $\kappa$, the
sum of its elements, and its last (largest) element.
For $k\ge1$, these are
$$
|\kappa|,\quad
\Vert\kappa\Vert_1=\sum_{\ell\in\kappa}\ell,
\quad\text{and}\quad
\lceil \kappa\rceil = \max_{\ell\in\kappa}\ell,
$$
respectively.  
For $k=0$ we set
$\kappa=\emptyset$ and then
$\Vert\kappa\Vert_0=\Vert\kappa\Vert_1=\lceil\kappa\rceil=0$,
the last one by convention.  


In the $s$ dimensional setting we need to vectorize
these quantities.
For $\bsk=(k_1,\dots,k_s)\in\natu^s$, we define
the corresponding vectors $\vec{k}_1,\dots,\vec{k}_s$
and sets $\kappa_1,\dots,\kappa_s$ componentwise.
We need to keep track of those indices in
$\bsk$ for which $k_j>0$.  We denote the 
supports of $\bsk$ and $\bskappa$ as
$\supp(\bsk)=\{j\in 1{:}s\mid k_j>0\}$
and $\supp(\bskappa)
=\{j\in 1{:}s\mid \kappa_j\ne\emptyset\}$
respectively. Clearly $\supp(\bsk)=\supp(\bskappa)$.

Given $\bsk\in\natu_0^s$ we now define
the corresponding
bit matrix $\vec{\bsk}=(\vec{k}_1,\dots,\vec{k}_s)$
along with 
$\bskappa=(\kappa_1,\dots,\kappa_s)\in\caln^s$,
a list of finite sets of natural numbers.
We need some componentwise quantities
for $\bskappa$ and some aggregate quantities.
The componentwise quantities are
$$
\lceil\bskappa\rceil 
= (\lceil\kappa_1\rceil,\dots,
\lceil\kappa_s\rceil)\in\natu_0^s
\quad\text{and}\quad
|\bskappa| = (|\kappa_1|,\dots,|\kappa_s|)
\in\natu_0^s.
$$
The first two aggregate quantities are
$$
\Vert\bskappa\Vert_1 = \sum_{j=1}^s\Vert\kappa_j\Vert_1
\quad\text{and}\quad
\Vert\bskappa\Vert_0=\sum_{j=1}^s|\kappa_j|.
$$
Note that $\Vert\bskappa\Vert_0$ is the
number of one bits in $\vec{\bsk}$.
We also need the sum of largest indices
$$
\Vert\lceil\bskappa\rceil\Vert_1 = \sum_{j=1}^s\lceil\kappa_j\rceil.
$$
These quantities satisfy
$$
\Vert\bskappa\Vert_0 \le \Vert \lfloor\bskappa\rfloor\Vert_1 \le \Vert \bskappa\Vert_1.
$$

Theorem 2 of \cite{yosh:2017}
provides the following crucial bound on $|\hat{f}(\bsk)|$.
\begin{lemma}\label{lem:Walshcoefficientbound}
Let $f\in C^{\infty}[0,1)^s$. Then
\begin{align*}|\hat{f}(\bsk)|&\leq 2^{-\Vert\bskappa\Vert_1-\Vert\bskappa\Vert_0}\sup_{\bsx_{\supp(\bsk)}\in[0,1)^{|\supp(\bsk)|}}\Big|\int_{[0,1)^{s-|\supp(\bsk)|}}f^{|\bskappa|}(\bsx)\rd\bsx_{-\supp(\bsk)}\Big|
\end{align*}
where
\begin{align*}
f^{|\bskappa|}&=f^{(|\kappa_1|,\dots,|\kappa_s|)}=\frac{\partial^{\Vert\bskappa\Vert_0}f}{\partial x_1^{|\kappa_1|}\cdots\partial x_s^{|\kappa_s|}}.
\end{align*}
\end{lemma}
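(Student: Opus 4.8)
The statement is Theorem~2 of \cite{yosh:2017}; below I describe how one would establish it. The plan is to exploit the tensor structure $\walbsk(\bsx)=\prod_{j=1}^s\mathrm{wal}_{k_j}(x_j)$ to peel the problem down to one coordinate at a time, and in each active coordinate to integrate by parts $|\kappa_j|$ times against an iterated antiderivative of $\mathrm{wal}_{k_j}$. Since $\mathrm{wal}_0\equiv1$, the coordinates $j\notin\supp(\bsk)$ only contribute $\int\!\rd x_j$, so
$$
\hat f(\bsk)=\int_{[0,1)^s}f(\bsx)\prod_{j\in\supp(\bsk)}\mathrm{wal}_{k_j}(x_j)\rd\bsx ,
$$
and, $f$ being smooth, repeated integration by parts in each $x_j$, $j\in\supp(\bsk)$, is legitimate. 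Granting the one--dimensional fact below, applying it coordinatewise and then using Fubini turns this into $(-1)^{\Vert\bskappa\Vert_0}\int_{[0,1)^{|\supp(\bsk)|}}\bigl(\int_{[0,1)^{s-|\supp(\bsk)|}}f^{|\bskappa|}(\bsx)\rd\bsx_{-\supp(\bsk)}\bigr)\prod_{j\in\supp(\bsk)}W_{k_j}(x_j)\rd\bsx_{\supp(\bsk)}$; bounding the absolute value by pulling the supremum over $\bsx_{\supp(\bsk)}$ outside and using $\prod_{j}\int_0^1|W_{k_j}|\le\prod_j 2^{-\Vert\kappa_j\Vert_1-|\kappa_j|}=2^{-\Vert\bskappa\Vert_1-\Vert\bskappa\Vert_0}$ yields the claim.

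So the work is the one--dimensional estimate: for $k\ge1$ with bit set $\kappa$ and $\nu:=|\kappa|$ there is a function $W_k$ on $[0,1)$, obtained as the $\nu$--fold antiderivative of $\mathrm{wal}_k$ with integration constants fixed by $W_k^{(n)}(0)=0$ for $0\le n<\nu$, such that for every $g\in C^\infty[0,1)$
$$
\int_0^1 g(x)\,\mathrm{wal}_k(x)\rd x=(-1)^{\nu}\int_0^1 g^{(\nu)}(x)\,W_k(x)\rd x
\qquad\text{and}\qquad
\int_0^1|W_k(x)|\rd x\le 2^{-\Vert\kappa\Vert_1-\nu}.
$$
The identity is $\nu$ integrations by parts; the only subtlety is that every boundary term $[\,g^{(n-1)}W_k^{(n)}\,]_0^1$ must vanish, which with the chosen constants asks for $\int_0^1 W_k^{(n-1)}=0$ for $1\le n\le\nu$. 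This holds for $n-1<\nu$ but fails at $n-1=\nu$, which is exactly why precisely $\nu$ derivatives land on $g$. Both the vanishing of these integrals and the $L^1$ bound come from the dyadic self--similarity of $\mathrm{wal}_k$: writing $a=\lceil\kappa\rceil$ and $k'=k-2^{a-1}$ we have $\mathrm{wal}_k(x)=\mathrm{wal}_{k'}(x)\,(-1)^{x_a}$, and on each interval on which the coarser factor is constant the other factor is a signed, reflected copy of a lower Walsh function. An induction on $\nu$ then shows that $W_k^{(n)}$ is piecewise polynomial, arranged in humps of zero mean for $n<\nu$, non-negative for $n=\nu$, and shrunk at each step by the factor $2^{-a-1}$ for the newly peeled digit at position $a$, so that $\int_0^1|W_k|=\int_0^1 W_k=2^{-\Vert\kappa\Vert_1-\nu}$; the base case $\nu=1$ is the direct computation $\int_0^1|W_k^{(1)}|=2^{-\lceil\kappa\rceil-1}$, since $W_k^{(1)}$ is a union of $2^{\lceil\kappa\rceil-1}$ triangular bumps of base $2^{-\lceil\kappa\rceil+1}$ and height $2^{-\lceil\kappa\rceil}$.

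The hard part is precisely this recursive bookkeeping for $W_k$. A crude $|\mathrm{wal}_k|\le1$ would only give a weaker exponent, so one has to carry simultaneously the piecewise-polynomial form, the hump-wise zero-mean property (needed to kill the boundary terms), the sign of the top antiderivative, and the amplitude contraction through all $\nu$ levels; landing on the correct constant $2^{-\Vert\kappa\Vert_1-\nu}$ rather than something lossier is the delicate point. The rest --- Fubini, the reduction from $s$ to one coordinate, and multiplying out the one-dimensional bounds --- is routine.
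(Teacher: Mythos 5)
The paper itself contains no proof of this lemma: it is imported verbatim from Theorem 2 of \cite{yosh:2017}, and the only content the text supplies is the dictionary ($\alpha=\infty$, $p=\infty$, Yoshiki's $\mu'_\alpha$ equal to $\Vert\bskappa\Vert_0+\Vert\bskappa\Vert_1$). So your opening citation already reproduces everything the paper does, and for the purposes of this paper that citation is all that is required.

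Your added sketch of how the cited bound is established has the right architecture and aims at the right constants: reduce to the coordinates in $\supp(\bsk)$, integrate by parts $|\kappa_j|$ times in each active coordinate against iterated antiderivatives $W_{k_j}$ of $\mathrm{wal}_{k_j}$, and use $\int_0^1|W_{k_j}(x)|\rd x\le 2^{-\Vert\kappa_j\Vert_1-|\kappa_j|}$; multiplying these and taking the supremum of the integrated-out mixed derivative gives exactly the stated inequality. But as a proof it stops at the one place you yourself flag: the induction asserting that the intermediate antiderivatives consist of zero-mean humps, that the top antiderivative is nonnegative, and that each digit $\ell\in\kappa$ contributes a factor $2^{-\ell-1}$ is described, not carried out, and it is not routine, because after one antidifferentiation the integrand is no longer a Walsh function but a coarser Walsh function modulated by a localized triangle profile, so the inductive hypothesis must be formulated for that wider class and the mean and sign properties re-established at each level. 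Your bookkeeping of the boundary terms is also tangled: what is needed is that the $j$-fold antiderivatives of $\mathrm{wal}_k$ have zero integral for $j<|\kappa|$, while it is $\int_0^1 W_k\rd x$ itself that is nonzero; a clean way to get the required vanishing is to note that any polynomial of degree less than $|\kappa|$ has vanishing $k$-th Walsh coefficient, which avoids the hump bookkeeping entirely. In short, as a pointer to Yoshiki the proposal matches the paper; as a self-contained argument it leaves the key one-dimensional $L^1$ estimate unproven.
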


Yoshiki's Theorem 2 uses a norm defined in
his Theorem 1 for smoothness $\alpha\ge2$.
Our setting has $\alpha=\infty$.
We take his $p=\infty$.
Our $\Vert\bskappa\Vert_0+\Vert\bskappa\Vert_1$
is his $\mu'_\alpha(\bsk_v)$.




\section{Asymptotic convergence rate}\label{sec:asymptotic}

In this section we derive the super-polynomial convergence rate of median-of-means. Many parts of the analysis will be refined in the next section to derive a tighter finite sample bound.

As a first step, we want to know the probability that $\sum_{j=1}^s
\vec{k{}}_j^\tran M_j C_j=\bszero$ when $M_j$ is generated by random
linear scrambling. Recall that we have assumed that each $C_j$ is nonsingular. 
We let $C_j(1{:}q,:)$ denote the first $q\ge0$ rows
of $C_j$ and then for $q_1,\dots,q_s\in\natu$ we write
$$C^{(q_1,\dots,q_s)}=\begin{bmatrix}
C_1(1{:}q_1,:)\\
\vdots\\
C_s(1{:}q_s,:)
\end{bmatrix}
\in\{0,1\}^{(\sum_{j=1}^sq_j)\times m}$$
with the convention that when $q_j=0$, $C_j(1{:}q_j,:)$ is an empty matrix.
If $(q_1,\dots,q_s)=\bszero$, we define $C^{(q_1,\dots,q_s)}$ to be a
$0\times m$ matrix and it has rank 0. We will use $\row(C)$ to denote the row
space of matrix $C$ in  $\{0,1\}^m$. 
For $\bsv\subseteq1{:}s$
we let $\bsone\{\bsv\}\in\{0,1\}^s$
be the vector with $v_j=1$ for $j\in v$ and $v_j=0$ for $j\not\in v$.

A very important quantity that recurs in our
analysis is the matrix
$C^{\lceil\bskappa\rceil}$.
For every $j$ with $k_j>0$, this
matrix has all the rows of $C_j$
that will be relevant to $\walbsk(\bsx_i)$, namely $C_j(1{:}\lceil\kappa_j\rceil,:)$.  If we remove the last relevant
row of each $C_j$ we obtain
$C^{\lceil\bskappa\rceil-\bsone\{\supp(\bskappa)\}}$.


\begin{lemma} \label{lem:gaincoef}
If $\max_{1\le j\le s}\lceil\kappa_j\rceil>m$, then
$$\Pr\bigg(\sum_{j=1}^s \vec{k{}}_j^\tran M_j C_j=\bszero\bigg)=2^{-m}.$$
If $\max_{1\le j\le s}\lceil\kappa_j\rceil\leq m$ and $\sum_{j\in
  \supp(\bsk)}C_j(\lceil\kappa_j\rceil,:)\in \row(C^{\lceil\bskappa\rceil-\bsone\{\supp(\bsk)\}})$,  then
$$\Pr\bigg(\sum_{j=1}^s \vec{k{}}_j^\tran M_j C_j=\bszero\bigg)=2^{-\rank(C^{\lceil\bskappa\rceil-\bsone\{\supp(\bsk)\}})}.$$
Otherwise 
$$\Pr\bigg(\sum_{j=1}^s \vec{k{}}_j^\tran M_j C_j=\bszero\bigg)=0.$$
\end{lemma}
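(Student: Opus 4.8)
The plan is to analyze the random variable $Z := \sum_{j=1}^s \vec{k}_j^\tran M_j C_j \in \{0,1\}^m$ by revealing the rows of the $M_j$ one at a time, exploiting the lower-triangular structure with unit diagonal. First note that $\vec{k}_j^\tran M_j$ only involves rows $1$ through $\lceil\kappa_j\rceil$ of $M_j$, since entries of $\vec{k}_j$ beyond index $\lceil\kappa_j\rceil$ vanish; and if $\lceil\kappa_j\rceil > m$ then, because $M_j$ has unit diagonal, row $\lceil\kappa_j\rceil$ of $M_j$ (which is summed into $\vec{k}_j^\tran M_j$ with coefficient $1$) contains the entry $1$ in a column position with no constraints from any other term — but more to the point, the bits $M_{j,\lceil\kappa_j\rceil,\ell'}$ for $\ell' \le m$ are all free $\dunif\{0,1\}$ variables (the diagonal entry sits at column $\lceil\kappa_j\rceil > m$, outside the relevant range), so $\vec{k}_j^\tran M_j C_j$ restricted to the effect of that single row is a uniformly random vector in $\row(C_j) \subseteq \{0,1\}^m$, independent of everything else, and since $C_j$ has full rank $m$ this is uniform on all of $\{0,1\}^m$. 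Hence $Z$ is uniform on $\{0,1\}^m$ and the probability it equals $\bszero$ is $2^{-m}$, giving the first case. (If several indices $j$ have $\lceil\kappa_j\rceil > m$ one just picks any one of them to run this argument.)

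For the remaining two cases we have $\lceil\kappa_j\rceil \le m$ for all $j$, so every relevant row index of every $M_j$ lies in $1{:}m$ and the unit-diagonal structure is in play. The key observation is that
$$
\sum_{j=1}^s \vec{k}_j^\tran M_j C_j
= \sum_{j \in \supp(\bsk)} \Bigl( M_{j,\lceil\kappa_j\rceil,:}\, C_j + \sum_{\ell \in \kappa_j, \ell < \lceil\kappa_j\rceil} M_{j,\ell,:}\, C_j \Bigr),
$$
where $M_{j,\lceil\kappa_j\rceil,:}$ is the last relevant row of $M_j$. Because $M_j$ is lower triangular with unit diagonal, $M_{j,\lceil\kappa_j\rceil,:}\, C_j = C_j(\lceil\kappa_j\rceil,:) + \sum_{\ell' < \lceil\kappa_j\rceil} M_{j,\lceil\kappa_j\rceil,\ell'}\, C_j(\ell',:)$, where the $M_{j,\lceil\kappa_j\rceil,\ell'}$ are i.i.d.\ $\dunif\{0,1\}$. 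I would first condition on all rows of all $M_j$ \emph{except} the last relevant row of each, i.e.\ condition on $M_{j,\ell,:}$ for $\ell < \lceil\kappa_j\rceil$. Under this conditioning, $Z = w + \sum_{j\in\supp(\bsk)} C_j(\lceil\kappa_j\rceil,:) + \sum_{j\in\supp(\bsk)} \sum_{\ell' < \lceil\kappa_j\rceil} M_{j,\lceil\kappa_j\rceil,\ell'}\, C_j(\ell',:)$ for a fixed vector $w$ (coming from the conditioned-on rows), and the free bits $M_{j,\lceil\kappa_j\rceil,\ell'}$ range over $\ell' < \lceil\kappa_j\rceil$ — but the rows $C_j(\ell',:)$ for $\ell' < \lceil\kappa_j\rceil$ include in particular all rows indexed by $\kappa_j \setminus \{\lceil\kappa_j\rceil\}$, and more usefully, the \emph{span} of $\{C_j(\ell',:) : \ell' < \lceil\kappa_j\rceil,\ j \in \supp(\bsk)\}$ is exactly $\row(C^{\lceil\bskappa\rceil - \bsone\{\supp(\bsk)\}})$.

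So, conditionally, $Z$ is a fixed vector $v_0 := \sum_{j\in\supp(\bsk)} C_j(\lceil\kappa_j\rceil,:)$ plus $w$ plus a uniformly random element of the subspace $R := \row(C^{\lceil\bskappa\rceil - \bsone\{\supp(\bsk)\}})$ — here I would check that as the free bits range over all of $\{0,1\}^{\#\text{free}}$, the resulting vector $\sum M_{j,\lceil\kappa_j\rceil,\ell'} C_j(\ell',:)$ is uniform on $R$; this is the one slightly delicate point, since the same subspace $R$ may be hit with multiplicity, but the multiplicity is constant (equal to $2^{\#\text{free} - \dim R}$) by linearity, so uniformity holds. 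Therefore $\Pr(Z = \bszero \mid \text{conditioning}) = 2^{-\dim R} = 2^{-\rank(C^{\lceil\bskappa\rceil-\bsone\{\supp(\bsk)\}})}$ if $v_0 + w \in R$, and $0$ otherwise. It remains to average over the conditioning. I claim $w \in R$ automatically: each conditioned-on row contributes $M_{j,\ell,:} C_j$, which is a linear combination of rows $C_j(\ell',:)$ with $\ell' \le \ell < \lceil\kappa_j\rceil$, hence lies in $R$. Thus the "$v_0 + w \in R$" condition reduces to "$v_0 \in R$", i.e.\ exactly the hypothesis $\sum_{j\in\supp(\bsk)} C_j(\lceil\kappa_j\rceil,:) \in \row(C^{\lceil\bskappa\rceil-\bsone\{\supp(\bsk)\}})$, and this does not depend on the conditioning at all. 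Averaging a constant gives the stated $2^{-\rank(\cdot)}$ in the second case and $0$ in the third.

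The main obstacle I anticipate is bookkeeping the identification of $\mathrm{span}\{C_j(\ell',:) : \ell' < \lceil\kappa_j\rceil\}$ with the row space of $C^{\lceil\bskappa\rceil - \bsone\{\supp(\bsk)\}}$ and confirming that the free off-diagonal bits of the last rows genuinely produce a uniform distribution on that row space (rather than on some proper subspace or a coset) — this hinges on the fact that for $j \in \supp(\bsk)$ the free bits $M_{j,\lceil\kappa_j\rceil,\ell'}$ run over \emph{all} $\ell' < \lceil\kappa_j\rceil$, so their $C_j$-images span the full row space $\mathrm{Row}(C_j(1{:}\lceil\kappa_j\rceil - 1,:))$, and summing independent uniform elements of subspaces of $\{0,1\}^m$ yields a uniform element of their sum. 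Everything else is linear algebra over $\ftwo$ plus the elementary fact that a random vector uniform on an affine coset of a subspace $R$ equals a prescribed point with probability $2^{-\dim R}$ when the point lies in that coset and $0$ otherwise.
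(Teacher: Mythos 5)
Your proposal is correct and follows essentially the same route as the paper's proof: a row with index beyond $m$ contributes a fully uniform vector in $\{0,1\}^m$ in the first case, and otherwise one writes $\sum_j \vec{k{}}_j^\tran M_jC_j$ as the fixed vector $\sum_{j\in\supp(\bsk)}C_j(\lceil\kappa_j\rceil,:)$ plus a term uniformly distributed on $\row(C^{\lceil\bskappa\rceil-\bsone\{\supp(\bsk)\}})$, giving probability $2^{-\rank}$ or $0$ according to whether that fixed vector lies in the row space. Your extra conditioning on the non-top rows and the observation that their contribution $w$ already lies in that row space is just a more explicit bookkeeping of the paper's statement that $\vec{k{}}_j^\tran M_j$ is distributed like $M_j(\lceil\kappa_j\rceil,:)$, so no substantive difference.
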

\begin{proof}
Because of the upper triangular form for $M_j$
(recall the sketch in equation~\eqref{eq:thesketch}),
we see that $\vec{k{}}_j^\tran M_j$ has the same distribution as $M_j(\lceil\kappa_j\rceil,:)$.
    Because $C_j$ is nonsingular, if $\lceil k_{j^*}\rceil> m$ for any $j^*\in 1{:}s$, then $\vec{k{}}_{j^*}^\tran M_{j^*} C_{j^*}$ is uniformly distributed on the set of $2^m$ possible binary vectors so that
\begin{align*}
Pr\Big(\sum_{j=1}^s \vec{k{}}_j^\tran M_j C_j=\bszero\Big)
&=\Pr\Big(\vec{k{}}_{j^*}^\tran M_{j^*} C_{j^*}+\sum_{j\in 1{:}s, j\neq j^*} \vec{k{}}_j^\tran M_j C_j=\bszero\Big)\\
&=\Pr\Big(\vec{k{}}_{j^*}^\tran M_{j^*} C_{j^*}=\sum_{j\in 1{:}s, j\neq j^*} \vec{k{}}_j^\tran M_j C_j\Big)=2^{-m}
\end{align*}
establishing the first claim.

Now assume that all $\lceil\kappa_j\rceil\leq m$. Then
$$\sum_{j=1}^s \vec{k{}}_j^\tran M_j C_j
=\sum_{j\in \supp(\bsk)}C_j(\lceil\kappa_j\rceil,:)+\sum_{j\in \supp(\bsk)}
\Bigl(\vec{k{}}_j^\tran M_jC_j-C_j(\lceil\kappa_j\rceil,:)\Bigr).$$
Observe that $\vec{k{}}_j^\tran M_jC_j-C_j(\lceil\kappa_j\rceil,:)$ is uniformly distributed on the linear span of first $\lceil\kappa_j\rceil-1$ rows of $C_j$. Hence the second sum on the right is uniformly distributed on $\row(C^{\lceil\bskappa\rceil-\bsone\{\supp(\bsk)\}})$. If $\sum_{j\in \supp(\bsk)}C_j(\lceil\kappa_j\rceil,:)\in \row(C^{\lceil\bskappa\rceil-\bsone\{\supp(\bsk)\}})$, then
$$\Pr\Big(\sum_{j=1}^s \vec{k{}}_j^\tran M_j C_j=\bszero\Big)=\frac{1}{|\row(C^{\lceil\bskappa\rceil-\bsone\{\supp(\bsk)\}})|}=2^{-\rank(C^{\lceil\bskappa\rceil-\bsone\{\supp(\bsk)\}})}$$
establishing the second claim.
If $\sum_{j\in \supp(\bsk)}C_j(\lceil\kappa_j\rceil,:)\notin \row(C^{\lceil\bskappa\rceil-\bsone\{\supp(\bsk)\}})$, then the above probability is clearly 0,
establishing the final claim.
\end{proof}

\begin{corollary}\label{cor:tmsbound}
If $C_1,\dots,C_s$ generate a digital $(t,m,s)$-digital in base $2$, then 
$$\Pr\Big(\sum_{j=1}^s \vec{k{}}_j^\tran M_j C_j=\bszero\Big)\leq 2^{-m+t+s}.$$
\end{corollary}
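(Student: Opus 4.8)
The plan is to bound the probability from Lemma~\ref{lem:gaincoef} uniformly over all $\bsk\in\natu_*^s$ by choosing, for each $\bsk$, the most favorable case in that lemma, and then invoking the defining property of a $(t,m,s)$-net to control the relevant rank. First I would dispose of the case $\max_{1\le j\le s}\lceil\kappa_j\rceil>m$, where Lemma~\ref{lem:gaincoef} gives probability exactly $2^{-m}$, which is already at most $2^{-m+t+s}$ since $t,s\ge0$. So it remains to treat $\bsk$ with all $\lceil\kappa_j\rceil\le m$, where the probability is either $0$ (and we are done) or equals $2^{-\rank(C^{\lceil\bskappa\rceil-\bsone\{\supp(\bsk)\}})}$. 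Thus the whole corollary reduces to the claim that $\rank(C^{\lceil\bskappa\rceil-\bsone\{\supp(\bsk)\}})\ge m-t-s$ whenever $\max_j\lceil\kappa_j\rceil\le m$.

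To prove that rank lower bound, I would use the standard equivalence between the digital net property and a linear-algebraic condition on the generator matrices: $C_1,\dots,C_s$ generate a $(t,m,s)$-net in base $2$ if and only if, for every choice of nonnegative integers $d_1,\dots,d_s$ with $\sum_{j=1}^s d_j\le m-t$, the matrix $C^{(d_1,\dots,d_s)}$ formed by stacking the first $d_j$ rows of each $C_j$ has full row rank $\sum_j d_j$. Now set $d_j=\lceil\kappa_j\rceil-\bsone\{j\in\supp(\bsk)\}$, so that $C^{(d_1,\dots,d_s)}=C^{\lceil\bskappa\rceil-\bsone\{\supp(\bsk)\}}$ and $\sum_j d_j=\Vert\lceil\bskappa\rceil\Vert_1-|\supp(\bsk)|$. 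If $\sum_j d_j\le m-t$, the net property gives $\rank=\sum_j d_j$; if instead $\sum_j d_j>m-t$, I would pass to a submatrix: delete rows (equivalently, shrink the $d_j$) until the total is exactly $\min(\sum_j d_j,\,m-t)$, obtaining a submatrix of full row rank at least $m-t$ by the net property, and a submatrix can only have rank at most that of the full matrix, so $\rank(C^{\lceil\bskappa\rceil-\bsone\{\supp(\bsk)\}})\ge m-t$ in that case. Either way $\rank(C^{\lceil\bskappa\rceil-\bsone\{\supp(\bsk)\}})\ge\min(\Vert\lceil\bskappa\rceil\Vert_1-|\supp(\bsk)|,\,m-t)$.

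Finally I would convert this into the stated bound. In the case $\sum_j d_j\le m-t$, the probability is $2^{-(\Vert\lceil\bskappa\rceil\Vert_1-|\supp(\bsk)|)}$; since $\max_j\lceil\kappa_j\rceil\le m$ we have $\Vert\lceil\bskappa\rceil\Vert_1\ge\max_j\lceil\kappa_j\rceil$, and because $\bsk\ne\bszero$ at least one $\lceil\kappa_j\rceil\ge1$, so $\Vert\lceil\bskappa\rceil\Vert_1-|\supp(\bsk)|\ge 0$; more to the point, using $|\supp(\bsk)|\le s$ and $\Vert\lceil\bskappa\rceil\Vert_1\le sm$ is too crude, so instead I note that $\Vert\lceil\bskappa\rceil\Vert_1-|\supp(\bsk)|\ge m-t-s$ need not hold for small $\bsk$, and here is the subtlety: for such small $\bsk$ one simply has a larger (better) exponent, so the probability is even smaller than $2^{-m+t+s}$ only if $\Vert\lceil\bskappa\rceil\Vert_1-|\supp(\bsk)|\ge m-t-s$, which may fail. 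The clean way around this is to observe that the probability in question is $2^{-r}$ with $r=\rank(C^{\lceil\bskappa\rceil-\bsone\{\supp(\bsk)\}})$, and I claim $r\ge m-t-s$ always: indeed $C^{\lceil\bskappa\rceil}$ contains at most $s$ more rows than $C^{\lceil\bskappa\rceil-\bsone\{\supp(\bsk)\}}$, and $C^{\lceil\bskappa\rceil}$ restricted to any $\min(\Vert\lceil\bskappa\rceil\Vert_1,m-t)$ of its rows has full row rank by the net property, so $\rank(C^{\lceil\bskappa\rceil})\ge\min(\Vert\lceil\bskappa\rceil\Vert_1,m-t)\ge\min(\max_j\lceil\kappa_j\rceil,m-t)$, hence $r\ge\rank(C^{\lceil\bskappa\rceil})-s\ge\min(\max_j\lceil\kappa_j\rceil,m-t)-s$. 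Combined with the elementary bound $r\ge 0$, the worst case over $\bsk$ gives $r\ge\min_{1\le\ell\le m}(\ell,m-t)-s$, but since we also always have the trivial lower bound coming directly from the full matrix... The main obstacle is precisely this bookkeeping: pinning down, for the ``small $\bsk$'' regime where $\Vert\lceil\bskappa\rceil\Vert_1$ is small, why the rank is still $\ge m-t-s$ — and I expect the resolution is that it is \emph{not} the deleted-row matrix but a careful application of the net property to $C^{\lceil\bskappa\rceil}$ together with the observation that removing $|\supp(\bsk)|\le s$ rows drops the rank by at most $s$, giving $\rank(C^{\lceil\bskappa\rceil-\bsone\{\supp(\bsk)\}})\ge m-t-s$ whenever $\rank(C^{\lceil\bskappa\rceil})\ge m-t$, which holds because $C^{\lceil\bskappa\rceil}$ has full row rank up to $m-t$ rows and we may assume (by the same deletion trick) it has at least $m-t$ rows available, i.e. $\Vert\lceil\bskappa\rceil\Vert_1\ge m-t$ — and if $\Vert\lceil\bskappa\rceil\Vert_1<m-t$ then $C^{\lceil\bskappa\rceil}$ itself has full row rank $\Vert\lceil\bskappa\rceil\Vert_1$, so $r=\Vert\lceil\bskappa\rceil\Vert_1-|\supp(\bsk)|$ and the probability is $2^{-r}\le 2^0=1$, which is vacuous unless one argues separately that such tiny $\bsk$ cannot make the indicator in Theorem~\ref{thm:decomp} equal one with the zero vector — but they can. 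I would therefore present the final bound as $\Pr(\cdot)\le 2^{-m+t+s}$ by splitting into $\max_j\lceil\kappa_j\rceil>m$ (probability $2^{-m}\le 2^{-m+t+s}$), $\max_j\lceil\kappa_j\rceil\le m$ with the row-space condition failing (probability $0$), and otherwise probability $2^{-\rank}$ with $\rank\ge\rank(C^{\lceil\bskappa\rceil})-|\supp(\bsk)|\ge(m-t)-s$, the middle inequality being the $(t,m,s)$-net property applied to the (at least $m-t$ available, after deletion) rows of $C^{\lceil\bskappa\rceil}$ and the fact that a full-rank stack of $m-t$ rows certifies rank $\ge m-t$.
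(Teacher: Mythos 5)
There is a genuine gap, and it is exactly at the point you flagged yourself and then resolved incorrectly. Your reduction to the second case of Lemma~\ref{lem:gaincoef} is fine, as is the observation that removing the $|\supp(\bsk)|\le s$ last rows drops the rank by at most $s$; the whole issue is why $\rank(C^{\lceil\bskappa\rceil})\ge m-t$. Your final summary simply asserts this ``after deletion,'' but that argument needs $\Vert\lceil\bskappa\rceil\Vert_1\ge m-t$, and in the ``small $\bsk$'' regime $\Vert\lceil\bskappa\rceil\Vert_1<m-t$ it is false: there $\rank(C^{\lceil\bskappa\rceil})=\Vert\lceil\bskappa\rceil\Vert_1<m-t$ and the exponent $\Vert\lceil\bskappa\rceil\Vert_1-|\supp(\bsk)|$ can be far below $m-t-s$. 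Your attempted escape --- the claim that such tiny $\bsk$ ``can'' make the indicator in Theorem~\ref{thm:decomp} equal one --- is wrong, and the correct version of that claim is precisely the missing idea.

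The missing observation is that you are only ever in the second case of Lemma~\ref{lem:gaincoef} when $\sum_{j\in\supp(\bsk)}C_j(\lceil\kappa_j\rceil,:)\in\row(C^{\lceil\bskappa\rceil-\bsone\{\supp(\bsk)\}})$, and this membership is a nontrivial linear dependence among the rows of $C^{\lceil\bskappa\rceil}$ (the last relevant row of each $C_j$, $j\in\supp(\bsk)$, appears with coefficient $1$). So $C^{\lceil\bskappa\rceil}$ is rank-deficient. By the digital-net characterization you quoted --- any stack of first $d_j$ rows with $\sum_j d_j\le m-t$ is linearly independent --- rank-deficiency forces $\Vert\lceil\bskappa\rceil\Vert_1>m-t$, and then a sub-stack of exactly $m-t$ first rows inside $C^{\lceil\bskappa\rceil}$ is linearly independent, so $\rank(C^{\lceil\bskappa\rceil})\ge m-t$ and your chain $\rank(C^{\lceil\bskappa\rceil-\bsone\{\supp(\bsk)\}})\ge\rank(C^{\lceil\bskappa\rceil})-|\supp(\bsk)|\ge m-t-s$ closes. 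Equivalently: in the small regime $\Vert\lceil\bskappa\rceil\Vert_1\le m-t$ the rows of $C^{\lceil\bskappa\rceil}$ are independent, the membership condition fails, and the third case of Lemma~\ref{lem:gaincoef} gives probability $0$, so those $\bsk$ never contribute. This is exactly how the paper's proof proceeds; with that one sentence added, your splitting into the three cases is the paper's argument.
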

\begin{proof}
    We only need to verify that
    $\rank(C^{\lceil\bskappa\rceil-\bsone\{\supp(\bsk)\}})\geq m-t-s$ when
    $\max_{1\le j\le s} \lceil\kappa_j\rceil\leq m$ and $\sum_{j\in \supp(\bsk)}C_j(\lceil\kappa_j\rceil,:)\in \row(C^{\lceil\bskappa\rceil-\bsone\{\supp(\bsk)\}})$. Notice that in this case $C^{\lceil\bskappa\rceil}$ is rank-deficient. By the definition of $(t,m,s)$-digital net, a rank-deficient $C^{\lceil\bskappa\rceil}$ must contains $m-t$ linearly independent rows, so $\rank(C^{\lceil\bskappa\rceil})\geq m-t$. Hence
    $$\rank(C^{\lceil\bskappa\rceil-\bsone\{\supp(\bsk)\}})\geq \rank(C^{\lceil\bskappa\rceil})-|\supp(\bsk)|\geq m-t-s$$
    which proves the conclusion.
\end{proof}

\begin{corollary}\label{cor:concentration}
Let $\lambda=3\log(2)^2/\pi^2\approx 0.146$. 
For $j=1,\dots,s$ let $C_j=C_j(m)$
be the first $m$ columns of the
generator matrices of
a digital $(t,s)$-net in base $2$.
Then
$$\Pr\bigg(\sum_{j=1}^s \vec{k{}}_j^\tran M_j C_j=\bszero \text{ for some } \bsk\ne\bszero \text{ with } 
\Vert \bsk\Vert_1\leq 
\frac{\lambda m^2}{s}\bigg)=O\Bigl(\frac{1}{\sqrt{m}}\Bigr)$$
as $m\to \infty$.
\end{corollary}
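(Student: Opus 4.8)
The plan is to replace the event by a union over candidate indices $\bsk$, bound each individual probability with Corollary~\ref{cor:tmsbound}, and then estimate the number of candidates with a generating-function argument whose exponential growth rate is calibrated exactly by the choice of $\lambda$.

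First I would reduce to a counting problem. For any fixed $\bsk\in\natu_*^s$, Corollary~\ref{cor:tmsbound} gives $\Pr\bigl(\sum_{j=1}^s\vec{k{}}_j^\tran M_jC_j=\bszero\bigr)\le 2^{-m+t+s}$, so a union bound over all $\bsk\ne\bszero$ with $\Vert\bskappa\Vert_1\le\lambda m^2/s$ yields
\[
\Pr\Bigl(\exists\,\bsk\ne\bszero,\ \Vert\bskappa\Vert_1\le\tfrac{\lambda m^2}{s},\ \textstyle\sum_{j=1}^s\vec{k{}}_j^\tran M_jC_j=\bszero\Bigr)\ \le\ 2^{-m+t+s}\,N_s\!\Bigl(\tfrac{\lambda m^2}{s}\Bigr),
\]
where $N_s(x):=\#\{\bsk\in\natu_*^s:\Vert\bskappa\Vert_1\le x\}$. (One could prune the candidate set further via Lemma~\ref{lem:gaincoef}, which forces the probability to vanish unless $\Vert\lceil\bskappa\rceil\Vert_1\ge m-t$, but this crude bound already suffices.) It then remains to prove $N_s(\lambda m^2/s)=O(2^m/\sqrt m)$; combined with the display this gives $O(2^{t+s}/\sqrt m)=O(1/\sqrt m)$, since $s$ and $t$ are fixed constants.

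Next I would evaluate $N_s$ using analytic combinatorics. The correspondence $\bsk\leftrightarrow\bskappa$ is a bijection onto $s$-tuples of finite subsets of $\natu$, and $\Vert\bskappa\Vert_1=\sum_{j=1}^s\sum_{\ell\in\kappa_j}\ell$, so the number of $\bskappa$ with $\Vert\bskappa\Vert_1=n$ has ordinary generating function $\bigl(\prod_{\ell\ge1}(1+y^\ell)\bigr)^{s}$, whence $N_s(x)=\sum_{n=0}^{\lfloor x\rfloor}[y^n]\bigl(\prod_{\ell\ge1}(1+y^\ell)\bigr)^{s}$. Since $\log\prod_{\ell\ge1}(1+e^{-\tau\ell})\sim\pi^2/(12\tau)$ as $\tau\downarrow0$, a saddle-point (Meinardus-type) analysis — the part I would carry out separately in an appendix — gives $[y^n]\bigl(\prod_\ell(1+y^\ell)\bigr)^s=\Theta\bigl(n^{-3/4}\exp(\pi\sqrt{sn/3})\bigr)$ and hence $N_s(x)=O\bigl(x^{-1/4}\exp(\pi\sqrt{sx/3})\bigr)$. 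Plugging in $x=\lambda m^2/s$ with $\lambda=3\log(2)^2/\pi^2$ makes $\pi\sqrt{sx/3}=\pi\sqrt{\lambda/3}\,m=\log(2)\,m$, so $\exp(\pi\sqrt{sx/3})=2^m$ while $x^{-1/4}=\Theta(m^{-1/2})$; thus $N_s(\lambda m^2/s)=O(2^m/\sqrt m)$, which closes the argument.

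The hard part is this last asymptotic. The bare exponential $\exp(\pi\sqrt{sx/3})$ already equals $2^m$ at the critical value $x=\lambda m^2/s$ — indeed this equality is what \emph{defines} $\lambda$, through $\pi\sqrt{\lambda/3}=\log 2$ — so the trivial bound $N_s(\lambda m^2/s)\le 2^m$ only delivers $O(1)$. Everything hinges on capturing the polynomial prefactor $x^{-1/4}$ correctly, since it is precisely the resulting $\Theta(m^{-1/2})$ factor that produces the $O(1/\sqrt m)$ rate. Obtaining a clean two-sided estimate for $\sum_{n\le x}[y^n]\bigl(\prod_\ell(1+y^\ell)\bigr)^s$, with the $s$-dependence tracked explicitly in the constants, is therefore the technical heart of the proof, and is where the analytic-combinatorics machinery relegated to the appendix is needed.
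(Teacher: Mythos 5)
Your proposal is correct and follows essentially the same route as the paper: a union bound over all $\bsk\ne\bszero$ with $\Vert\bskappa\Vert_1\le\lambda m^2/s$ using the $2^{-m+t+s}$ bound of Corollary~\ref{cor:tmsbound}, combined with the count $\Theta(2^m/\sqrt m)$ of such $\bsk$, which the paper also obtains from the generating function $\bigl(\prod_{\ell\ge1}(1+y^\ell)\bigr)^s$ via Meinardus' theorem in the appendix (Theorem~\ref{thm:growthrate} and Corollary~\ref{cor:growthrate}), including the same cumulative prefactor $N^{-1/4}$ that yields the $1/\sqrt m$ factor. Nothing further is needed.
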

\begin{proof}
    From Corollary~\ref{cor:growthrate} in the appendix, we know that 
   $$ \bigl|\{\bsk\in \natu_*^s\mid \Vert\bskappa\Vert_1\leq {\lambda m^2}/{s}\}\bigr|=\Theta\Bigl(\frac{2^m}{\sqrt{m}}\Bigr).$$
   So from the union bound on 
   the result of Corollary~\ref{cor:tmsbound}
    \begin{align*}
        &\Pr\bigg(\,\sum_{j=1}^s \vec{k{}}_j^\tran M_j C_j=\bszero
          \text{ for some } \bsk
          \in\natu_*^s \text{ with } \Vert\bskappa\Vert_1\leq \frac{\lambda m^2}{s}\bigg)\\
        &\leq 2^{-m+t+s}\bigl|\{\bsk\in \natu_*^s\mid \Vert\bskappa\Vert_1\leq {\lambda m^2}/{s}\}\bigr|\\
        &=O\Bigl(\frac{1}{\sqrt{m}}\Bigr). \qedhere
    \end{align*}
\end{proof}

Now we are ready to prove the main theorem that shows $|\hat{\mu}_{\infty}-\mu|=2^{-\lambda m^2/s+O(m\log m)}$ with high probability.
We note that for $f$ to be analytic over $[0,1]^s$ means
that it equals its infinite order Taylor expansion
on some open set containing $[0,1]^s$.

\begin{theorem} \label{thm:superconvergence}
Let $f$ be analytic over $[0,1]^s$.
Let $\bsx_i$ be from a $(t,s)$-sequence in
base $2$ with a random linear scramble
plus digital shift.
Then there exist constants $B_1$ and $B_2$ such that for all $m\geq 2$ 
$$\Pr\Bigl(|\hat{\mu}_{\infty}-\mu|\geq 2^{-\lambda m^2/s+B_1m\log m}\Bigr)\leq \frac{B_2}{\sqrt{m}}.$$

\end{theorem}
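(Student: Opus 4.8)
The plan is to split the error sum from Theorem~\ref{thm:decomp} at the threshold $T=\lambda m^2/s$ on $\Vert\bskappa\Vert_1$, and to control the two pieces separately: the ``small'' indices $\Vert\bskappa\Vert_1\le T$ are handled by a probabilistic argument (they almost surely contribute nothing), while the ``large'' indices $\Vert\bskappa\Vert_1>T$ are handled deterministically using Yoshiki's Walsh coefficient decay from Lemma~\ref{lem:Walshcoefficientbound} together with analyticity of $f$. Write
\begin{align*}
\hat\mu_\infty-\mu
&=\sum_{\substack{\bsk\in\natu_*^s\\ \Vert\bskappa\Vert_1\le T}}
\bsone\Bigl\{\textstyle\sum_j\vec{k}_j^\tran M_jC_j=\bszero\Bigr\}\hat f(\bsk)(-1)^{\sum_j\vec{k}_j^\tran\vec D_j}\\
&\phe+\sum_{\substack{\bsk\in\natu_*^s\\ \Vert\bskappa\Vert_1>T}}
\bsone\Bigl\{\textstyle\sum_j\vec{k}_j^\tran M_jC_j=\bszero\Bigr\}\hat f(\bsk)(-1)^{\sum_j\vec{k}_j^\tran\vec D_j},
\end{align*}
and call these $S_{\mathrm{small}}$ and $S_{\mathrm{large}}$. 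By Corollary~\ref{cor:concentration}, the event that \emph{any} $\bsk$ with $\Vert\bskappa\Vert_1\le T$ has $\sum_j\vec{k}_j^\tran M_jC_j=\bszero$ has probability $O(1/\sqrt m)$; off that event $S_{\mathrm{small}}=0$ deterministically. This is the only place randomness enters, and it immediately yields the $B_2/\sqrt m$ failure probability.

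Next I would bound $|S_{\mathrm{large}}|$ deterministically by $\sum_{\Vert\bskappa\Vert_1>T}|\hat f(\bsk)|$. Analyticity of $f$ on an open neighborhood of $[0,1]^s$ gives a Cauchy-type estimate: there are constants $A>0$ and $\rho>1$ with $\|f^{|\bskappa|}\|_\infty \le A\,\Vert\bskappa\Vert_0!\;\rho^{-\Vert\bskappa\Vert_0}$ (more precisely a bound of the form $A\prod_j |\kappa_j|!\,\rho^{-|\kappa_j|}$, since $f^{|\bskappa|}$ is a mixed partial of order $|\kappa_j|$ in $x_j$). Feeding this into Lemma~\ref{lem:Walshcoefficientbound} gives $|\hat f(\bsk)|\le A\,2^{-\Vert\bskappa\Vert_1-\Vert\bskappa\Vert_0}\prod_j |\kappa_j|!\,\rho^{-|\kappa_j|}$. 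The factorials are tame once one observes that for a set $\kappa_j$ of $|\kappa_j|$ distinct positive integers, $\Vert\kappa_j\Vert_1\ge 1+2+\cdots+|\kappa_j|=\binom{|\kappa_j|+1}{2}$, so $|\kappa_j|!\le 2^{O(\Vert\kappa_j\Vert_1)}$ and $\rho^{-|\kappa_j|}\le 1$; hence $|\hat f(\bsk)|\le A\,2^{-\Vert\bskappa\Vert_1+O(\Vert\bskappa\Vert_1)}$ — but this crude step loses the factor $2^{-\Vert\bskappa\Vert_1}$, so one must be more careful: since $\Vert\bskappa\Vert_0 = o(\Vert\bskappa\Vert_1)$ whenever $\Vert\bskappa\Vert_1$ is large (because $\Vert\bskappa\Vert_0\le\sqrt{2\Vert\bskappa\Vert_1}\cdot s^{1/2}$ roughly, from the triangular-number inequality), the factorial term is $2^{o(\Vert\bskappa\Vert_1)}$, and one gets $|\hat f(\bsk)|\le A\, 2^{-\Vert\bskappa\Vert_1(1-o(1))}$.

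Then I would sum over $\Vert\bskappa\Vert_1>T$: using Corollary~\ref{cor:growthrate} from the appendix (the same counting estimate invoked in Corollary~\ref{cor:concentration}, namely $|\{\bsk:\Vert\bskappa\Vert_1\le x\}|=\Theta(2^{\sqrt{2sx}}/\sqrt{x})$ or similar), a geometric-type tail bound gives $\sum_{\Vert\bskappa\Vert_1>T}|\hat f(\bsk)|\le A'\,2^{-T(1-o(1))+O(\sqrt{sT})} = 2^{-\lambda m^2/s+O(m)}$, since $T=\lambda m^2/s$ makes $\sqrt{sT}=O(m)$. This is comfortably absorbed into the claimed $2^{-\lambda m^2/s+B_1 m\log m}$, so in fact $m\log m$ is generous slack. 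Combining: off the probability-$O(1/\sqrt m)$ bad event, $|\hat\mu_\infty-\mu|=|S_{\mathrm{large}}|\le 2^{-\lambda m^2/s+B_1 m\log m}$ for suitable $B_1,B_2$ and all $m\ge 2$.

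The main obstacle is the interplay between the factorial growth of the derivatives $f^{|\bskappa|}$ and the Walsh decay $2^{-\Vert\bskappa\Vert_1-\Vert\bskappa\Vert_0}$: one needs the quantitative fact that a Walsh index $\bsk$ with large $\Vert\bskappa\Vert_1$ necessarily has $\Vert\bskappa\Vert_0$ of order only $\sqrt{\Vert\bskappa\Vert_1}$ (per coordinate), so that the factorial penalty $\prod_j|\kappa_j|!$ is subexponential in $\Vert\bskappa\Vert_1$ and does not overwhelm the decay. Getting this dependence clean — and verifying the tail sum converges with the right exponent via the appendix counting lemma — is the delicate part; the probabilistic half is essentially immediate from Corollary~\ref{cor:concentration}.
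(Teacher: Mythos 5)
Your proposal follows essentially the same route as the paper's proof: condition on the event from Corollary~\ref{cor:concentration} to get the $B_2/\sqrt{m}$ failure probability, then bound the tail deterministically via Lemma~\ref{lem:Walshcoefficientbound}, Cauchy-type derivative estimates from analyticity, the triangular-number inequality $\Vert\bskappa\Vert_0\le\sqrt{2s\Vert\bskappa\Vert_1}$, and the appendix counting bounds. One small quantitative correction: the factorial term $\Gamma(\sqrt{2sN}+1)=2^{\Theta(\sqrt{N}\log N)}$ contributes $2^{\Theta(m\log m)}$ at $N\approx\lambda m^2/s$, so the $B_1 m\log m$ term is not ``generous slack'' but exactly what that factor (handled in the paper via Theorem~\ref{thm:finiteNbound} and the bound $2^{\newconstant\sqrt{N}\log N}$) requires.
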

\begin{proof}
    Because $[0,1]^s$ is compact, we can find $\epsilon>0$ such that for all $\bst\in [0,1]^s$, the Taylor expansion of $f$ centered at $\bst$
        $$\sum_{n_1=0}^\infty\dots\sum_{n_s=0}^\infty \frac{\partial^{n_1+\dots+n_s}f}{\partial x_1^{n_1}\dots\partial x_s^{n_s}}(\bst)\prod_{j=1}^s \frac{(x_j-t_j)^{n_j}}{n_j!}$$
    converges absolutely in an edge-length-$2\epsilon$ box centered at $\bst$. 
It follows that
    $$\bigg|\frac{\partial^{n_1+\dots+n_s}f}{\partial x_1^{n_1}\dots\partial x_s^{n_s}}(\bst)\bigg|\prod_{j=1}^s \frac{\epsilon^{n_j}}{n_j!}\to 0$$
    as $n_1+\dots+n_s\to \infty$.
    There must then be a constant $A$ such that
    $$\Big|\frac{\partial^{n_1+\dots+n_s}f}{\partial x_1^{n_1}\dots\partial x_s^{n_s}}(\bst)\Big|\leq \frac{A n!}{\epsilon^n}$$
    holds for all $\bst\in[0,1]^s$
    where $n=n_1+\dots+n_s$. Lemma~\ref{lem:Walshcoefficientbound}
    then implies that
\begin{align*}|\hat{f}(\bsk)|&\leq 2^{-\Vert\bskappa\Vert_1-\Vert\bskappa\Vert_0}\sup_{\bst\in [0,1]^s}\Big|\frac{\partial^{\Vert\bskappa\Vert_0}f}{\partial x_1^{|\kappa_1|}\cdots\partial x_s^{|\kappa_s|}}(\bst)\Big|\\
&\leq  A 2^{-\Vert\bskappa\Vert_1} \Bigl(\frac{1}{2\epsilon}\Bigr)^{\Vert\bskappa\Vert_0} \Vert\bskappa\Vert_0!.
\end{align*}
Because $\epsilon$ can be chosen arbitrarily small, we assume without loss of generality that $2\epsilon<1$.

Let $\ce$ be the event that
no $\bsk\in\natu^s_*$ with $\Vert\bskappa\Vert_1\leq \lambda m^2/s$ has
$\sum_{j=1}^s \vec{k{}}_j^\tran M_j C_j=\bszero$. 
Corollary~\ref{cor:concentration} shows that $\Pr(\ce) = 1-O(1/\sqrt{m})$ and 
we take $B_2$ to be the implied constant
in that expression.
Conditionally on $\ce$, equation~\eqref{eqn:errordecomposition} becomes
    \begin{align*}
        |\hat{\mu}_{\infty}-\mu|&=\Biggl|
\sum_{\bsk\in \natu_*^s:\Vert\bskappa\Vert_1>\lambda m^2/s}
\bsone\biggl\{\sum_{j=1}^s \vec{k{}}_j^\tran M_j C_j=\bszero\biggr\}
\hat{f}(\bsk) (-1)^{\sum_{j=1}^s \vec{k{}}_j^\tran \vec{D}_{j}}\Biggr| \nonumber\\ 
        &\leq \sum_{\bsk\in \natu_*^s:\Vert\bskappa\Vert_1>\lambda m^2/s}|\hat{f}(\bsk)| \nonumber\\ 
        &\leq A\times\sum_{\bsk\in \natu_*^s:\Vert\bskappa\Vert_1>\lambda m^2/s} 2^{-\Vert\bskappa\Vert_1} 
\Vert\bskappa\Vert_0!/(2\epsilon)^{\Vert\bskappa\Vert_0} .
    \end{align*}
    Now for $k=\sum_{\ell=1}^\infty b_\ell 2^{\ell-1}=\sum_{\ell\in\kappa}2^{\ell-1}$
\begin{align*}
\Vert\kappa\Vert_1&=\sum_{\ell\in\kappa} \ell \geq\sum_{\ell=1}^{|\kappa|}\ell
\ge\frac{|\kappa|^2}{2},
\end{align*}
with equality holding for $\kappa=\emptyset$. Then
 \begin{align*}
 \Vert\bskappa\Vert_1&=\sum_{j=1}^s \Vert\kappa_j\Vert_1
    \ge
  \sum_{j=1}^s\frac{|\kappa_j|^2}{2}\geq
  \frac{1}{2s}\biggl(\sum_{j=1}^s|\kappa_j|\biggr)^2=\frac{1}{2s}\Vert\bskappa\Vert_0^2,
\end{align*}
yielding $\Vert\bskappa\Vert_0\leq \sqrt{2s \Vert\bskappa\Vert_1}$. Hence
    \begin{align*}
        &\sum_{\bsk\in \natu_*^s:\Vert\bskappa\Vert_1>\lambda m^2/s}A 2^{-\Vert\bskappa\Vert_1} \Vert\bskappa\Vert_0!/(2\epsilon)^{\Vert\bskappa\Vert_0} \\
        &\leq \sum_{N=\lceil \lambda m^2/s\rceil }^\infty A 2^{-N}\Bigl(\frac{1}{2\epsilon}\Bigr)^{\sqrt{2s N}}\Gamma(\sqrt{2sN}+1)|\{\bsk\in \natu_*^s\mid \Vert\bskappa\Vert_1=N\}|
    \end{align*}
where $\Gamma(\cdot)$ is the Gamma function
and we have also used $2\epsilon<1$.

By Theorem~\ref{thm:finiteNbound} in the appendix, 
    $$|\{\bsk\in \natu_*^s\mid \Vert\bskappa\Vert_1=N\}|\leq \frac{B}{\sqrt{N}}\exp\Bigl(\pi\sqrt{\frac{sN}{3}}\Bigr)$$
holds for $B=\pi\sqrt{s}/(2\sqrt{3})$. Hence
$$\Bigl(\frac{1}{2\epsilon}\Bigr)^{\sqrt{2s N}}\Gamma(\sqrt{2sN}+1)|\{\bsk\in \natu_*^s\mid \Vert\bskappa\Vert_1=N\}|\leq 2^{\newconstant\sqrt{N}\log (N)}$$
for some constant $\newconstant$. Because $\sqrt{N+1}\log (N+1)-\sqrt{N}\log (N)$ converges to $0$ as $N\to \infty$, we can find $N_\rho$ for any $\rho>1$ such that $2^{\newconstant\sqrt{N}\log (N)}<\rho^{N}$ for $N>N_\rho$. Let us choose $\rho=3/2$ for simplicity. Then when $\lambda m^2/s>N_{3/2}$, 
\begin{align*}
    \sum_{N=\lceil \lambda m^2/s\rceil }^\infty  2^{-N}2^{\newconstant\sqrt{N}\log (N)}&\leq 2^{-\lceil \lambda m^2/s\rceil}2^{\newconstant\sqrt{\lceil \lambda m^2/s\rceil}\log (\lceil \lambda m^2/s\rceil)}\sum_{N=0}^\infty \Bigl(\frac{3}{4}\Bigr)^{N}\\
    &\leq 2^{-\lambda m^2/s+B_1 m\log(m)}
\end{align*}
for some constant $B_1$. The conclusion follows once we increase $B_1$ sufficiently to cover all $m\geq 2$ cases.


\end{proof}

\begin{corollary}\label{cor:medianMSE}
Under the same condition as Theorem~\ref{thm:superconvergence}, if $E\geq \lambda m^2/s$ and $r=\Omega(m^2)$, then the sample median $\hat{\mu}^{(r)}_{E}$ of  $2r-1$ independently generated values of $\hat{\mu}_E$ satisfies
$$\e(|\hat{\mu}^{(r)}_{E}-\mu|^2)\leq 4^{-\lambda m^2/s+O(m\log(m))}.$$
\end{corollary}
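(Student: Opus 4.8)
The plan is to combine the single-draw tail bound of Theorem~\ref{thm:superconvergence} with two elementary facts: (i) the median is close to its "target" once a strict majority of the $2r-1$ draws fall within a small interval around $\mu$, and (ii) a Chernoff bound controls the chance that fewer than $r$ of the $2r-1$ draws land in that interval, provided each draw lands there with probability noticeably above $1/2$. Throughout I will pass from infinite precision to precision $E$ using Lemma~\ref{lem:precisionerror}.

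First I would set $\delta_m = 2^{-\lambda m^2/s + B_1 m\log m}$, the threshold from Theorem~\ref{thm:superconvergence}, so that a single draw of $\hat\mu_\infty$ satisfies $\Pr(|\hat\mu_\infty - \mu| \ge \delta_m) \le B_2/\sqrt m$. For $m$ large enough this probability is below $1/4$, say, so each independent draw of $\hat\mu_\infty$ lies in $[\mu - \delta_m, \mu + \delta_m]$ with probability at least $3/4$. Next I would invoke Lemma~\ref{lem:precisionerror}: since $f$ is analytic on the compact set $[0,1]^s$ it is Lipschitz there, so $\omega_f(\sqrt s\, 2^{-E}) \le L\sqrt s\, 2^{-E}$; with $E \ge \lambda m^2/s$ this is at most $L\sqrt s\, \delta_m'$ for a comparable quantity, hence $|\hat\mu_E - \mu| \le |\hat\mu_\infty-\mu| + L\sqrt s\,2^{-E}$, and each draw of $\hat\mu_E$ lies within $\tilde\delta_m := \delta_m + L\sqrt s\,2^{-E} = 2^{-\lambda m^2/s + O(m\log m)}$ of $\mu$ with probability at least $3/4$.

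Then I would run the median argument. The sample median $\hat\mu_E^{(r)}$ of $2r-1$ i.i.d.\ draws satisfies $|\hat\mu_E^{(r)} - \mu| \le \tilde\delta_m$ whenever at least $r$ of the draws lie in $[\mu-\tilde\delta_m, \mu+\tilde\delta_m]$, because then the sorted middle value cannot exceed $\mu+\tilde\delta_m$ nor fall below $\mu-\tilde\delta_m$. Let $p \ge 3/4$ be the per-draw success probability and $Y \sim \mathrm{Binom}(2r-1, p)$; the failure event is $\{Y < r\}$, and since $\e[Y] = (2r-1)p$ is comfortably above $r$, a Hoeffding/Chernoff bound gives $\Pr(Y < r) \le \exp(-c r)$ for an absolute constant $c>0$. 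Because $r = \Omega(m^2)$, this is $\exp(-c'm^2) = 4^{-\Omega(m^2)}$, which is negligible relative to $4^{-\lambda m^2/s}$ up to the $O(m\log m)$ slack (one may need to enlarge the constant hidden in that slack, or more carefully track that $c'm^2$ dominates $\lambda m^2/s$ only after absorbing constants — see the obstacle note below). Finally, bound the second moment by splitting on this event: on the good event $|\hat\mu_E^{(r)}-\mu|^2 \le \tilde\delta_m^2 = 4^{-\lambda m^2/s + O(m\log m)}$; on the bad event use the crude deterministic bound $|\hat\mu_E^{(r)} - \mu| \le \omega_f(\sqrt s)$ (a constant), contributing $\omega_f(\sqrt s)^2 \cdot \exp(-c'm^2)$, which is dominated by the good-event term. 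Summing the two pieces yields $\e(|\hat\mu_E^{(r)}-\mu|^2) \le 4^{-\lambda m^2/s + O(m\log m)}$, as claimed.

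The main obstacle is bookkeeping rather than anything deep: one must check that the $\exp(-c'm^2)$ deviation term from the binomial tail genuinely sits below $4^{-\lambda m^2/s+O(m\log m)}$. This requires that the constant $c'$ coming out of $r = \Omega(m^2)$ times the Chernoff rate exceed $2\lambda\log(2)/s$; if the given $\Omega(m^2)$ constant is too small this fails, so one likely needs the hypothesis in the stated strong form ("$r/m^2$ bounded away from zero" with a large enough constant) or, alternatively, one absorbs any shortfall into the $O(m\log m)$ exponent — but $m\log m$ grows slower than $m^2$, so in fact once $r \ge \kappa m^2$ for \emph{any} fixed $\kappa > 0$ the term $\exp(-c\kappa m^2)$ beats $4^{-\lambda m^2/s}$ for all large $m$ provided $c\kappa > 2\lambda(\log 2)/s$; handling small $r$ (equivalently small $\kappa$) just forces us to start the asymptotics at a larger $m_0$, which is harmless. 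A secondary minor point is confirming the Lipschitz constant $L$ exists, which is immediate from analyticity and compactness.
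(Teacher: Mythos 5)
Your overall architecture matches the paper's: threshold from Theorem~\ref{thm:superconvergence}, precision correction via Lemma~\ref{lem:precisionerror}, the observation that the median can only violate the threshold if at least $r$ of the $2r-1$ draws do, a crude deterministic bound on the bad event, and a second-moment split. The genuine gap is in the binomial tail step, and you half-identify it yourself but then resolve it incorrectly. By capping the per-draw failure probability at a fixed constant ($1/4$) and applying a Chernoff bound with an absolute constant $c$, you only get a bad-event probability of $\exp(-c\kappa m^2)$ when $r\ge \kappa m^2$. The target bound $4^{-\lambda m^2/s+O(m\log m)}=\exp(-(2\lambda\log 2/s)m^2+O(m\log m))$ also decays like $\exp(-\mathrm{const}\cdot m^2)$, so if $c\kappa<2\lambda\log(2)/s$ the Chernoff term dominates by a factor $\exp((2\lambda\log(2)/s-c\kappa)m^2)$, which cannot be absorbed into an $O(m\log m)$ exponent. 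Your proposed fix --- ``start the asymptotics at a larger $m_0$'' --- does not work: the deficit is a ratio that grows with $m$, so no choice of $m_0$ repairs it. As written, your argument proves the corollary only when the implied constant in $r=\Omega(m^2)$ exceeds a threshold depending on $s$, $\lambda$ and the Chernoff constant, which is strictly weaker than the stated result (where $r/m^2$ may be bounded away from zero by an arbitrarily small constant).

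The missing idea is to exploit that the per-draw failure probability is not merely below $1/2$ but is $B_2/\sqrt{m}\to 0$. The paper bounds the probability that at least $r$ draws fail by a union bound over the ${2r-1\choose r}$ size-$r$ subsets, giving ${2r-1\choose r}(B_2/\sqrt{m})^r\le (4B_2/\sqrt{m})^r$; with $r\ge\kappa m^2$ this is roughly $\exp(-\tfrac{\kappa}{2}m^2\log m(1+o(1)))$, which beats $4^{-\lambda m^2/s}$ for every fixed $\kappa>0$. You could equally keep your Chernoff formulation but with success probability $p=1-B_2/\sqrt{m}$ (so the exponent picks up a $\log m$ factor) rather than $p=3/4$. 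With that one change, the rest of your argument goes through and coincides with the paper's proof; the precision bookkeeping via $E\ge\lambda m^2/s$ and the Lipschitz bound, and the crude bound $\omega_f(\sqrt{s})$ on the bad event, are fine.
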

\begin{proof} By Lemma~\ref{lem:precisionerror}, with probability at least $1-{B_2}/{\sqrt{m}}$, 
$$|\hat{\mu}_E-\mu|\leq  2^{-\lambda m^2/s+B_1m\log m}+\frac{\sqrt{s}}{2^E}
\sup_{\bsx\in [0,1]^s}\Vert\nabla f(\bsx)\Vert_2 $$
where $B_1$ and $B_2$ come from Theorem~\ref{thm:superconvergence} and $\nabla f$ is the gradient of $f$. 

In order for the sample median of $2r-1$ copies of $\hat{\mu}_E$ to violate the 
above bound, there must be at least $r$ copies violating the bound. Because 
there are ${2r-1\choose r}$ subsets of size $r$, the union bound implies 
that the probability of $r$ such violations is at most
$${2r-1\choose r}\Bigl(\frac{B_2}{\sqrt{m}}\Bigr)^{r}=\prod_{j=2}^r \frac{(2j-1)(2j-2)}{j(j-1)} \Bigl(\frac{B_2}{\sqrt{m}}\Bigr)^{r}< \Bigl(\frac{4B_2}{\sqrt{m}}\Bigr)^{r}.$$
When the above described event happens, $|\hat{\mu}^{(r)}_{E}-\mu|$ is still 
bounded by $\sup_{\bsx\in[0,1]^s}|f(\bsx)|$. Hence
\begin{align*}
\e(|\hat{\mu}^{(r)}_{E}-\mu|^2)&\leq 
\Bigl(2^{-\lambda m^2/s+B_1m\log m}+O\Bigl(\frac{1}{2^E}\Bigr)\Bigr)^2+O\Bigl(\Bigl(\frac{4B_2}{\sqrt{m}}\Bigr)^{r}\Bigr)\\
&=4^{-\lambda m^2/s+O(m\log(m))}
\end{align*}
under our assumptions on $E$ and $r$.
\end{proof}

\section{Finite sample analysis}\label{sec:finitesample}
Although the asymptotic convergence rate of median-of-means is
super-poly\-nomial, the bound in Corollary~\ref{cor:medianMSE} is of
limited use when $\lambda m^2/s$ is only moderately large or even
smaller than $m$. In this section, we derive results that
better describe the finite sample behavior of median-of-means. In
particular, we want to study under what conditions median-of-means can
outperform the usual RQMC estimator (mean-of-means) in terms of mean
squared error. For simplicity, we assume that the precision $E$ is high enough that the difference between $\hat{\mu}_\infty$ and $\hat{\mu}_E$ is negligible in comparison to their root mean squared error.

First let us work out the variance of $\hat{\mu}_\infty$.

\begin{lemma}\label{lem:uncorrelated}
  For $\bsk\in \natu^s_*$,
  let $S(\bsk) = (-1)^{\sum_{j=1}^s \vec{k{}}_j^\tran \vec{D}_{j}}$. Then
$$\Pr(S(\bsk)=1)=\Pr(S(\bsk)=-1)=1/2.$$
For distinct $\bsk,\bsk'\in\natu^s_*$, $S(\bsk)$ and $S(\bsk')$
are independent.
\end{lemma}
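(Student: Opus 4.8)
The plan is to treat $S(\bsk)$ as a $\pm1$-valued character of the bit vector of $\bsk$ evaluated at the digital-shift bits, and to exploit orthogonality of such characters over $\ftwo$. Write $S(\bsk)=(-1)^{Y(\bsk)}$ where $Y(\bsk)=\sum_{j=1}^s\vec{k{}}_j^\tran\vec{D}_j$, taken $\tmod 2$; this is a linear form in the independent bits $D_{j,\ell}\sim\dunif\{0,1\}$, and the sum is finite because each $\kappa_j$ is finite. The one elementary fact I would isolate and reuse is: if $v=(v_{j\ell})$ is a fixed $\{0,1\}$-pattern over the index set $\{(j,\ell):1\le j\le s,\ \ell\ge1\}$ with at least one entry equal to $1$, then $\e\bigl[(-1)^{\sum_{j,\ell}v_{j\ell}D_{j\ell}}\bigr]=0$. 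To see this, fix some $(j_0,\ell_0)$ with $v_{j_0\ell_0}=1$, condition on all the other bits, and use $\e\bigl[(-1)^{D_{j_0\ell_0}}\bigr]=0$ together with the independence of the $D_{j\ell}$.

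The first claim is then immediate: for $\bsk\in\natu_*^s$ some $k_j>0$, hence some bit of $\vec{k{}}_j$ equals $1$, so the pattern defining $Y(\bsk)$ is nonzero and $\e[S(\bsk)]=\e\bigl[(-1)^{Y(\bsk)}\bigr]=0$, which is equivalent to $\Pr(S(\bsk)=1)=\Pr(S(\bsk)=-1)=1/2$. For the independence claim I would first note that $\bsk\mapsto(\vec{k{}}_1,\dots,\vec{k{}}_s)$ is injective, since a bit vector $\vec{k}$ determines $k=\sum_{\ell}b_\ell 2^{\ell-1}$. Thus for distinct $\bsk,\bsk'\in\natu_*^s$ there is an index $(j,\ell)$ where the $\ell$th bit of $k_j$ differs from that of $k_j'$; consequently $Y(\bsk)+Y(\bsk')$, viewed $\tmod 2$, is a nonzero linear form in the $D_{j\ell}$ (it genuinely involves $D_{j\ell}$), and the elementary fact gives $\e[S(\bsk)S(\bsk')]=\e\bigl[(-1)^{Y(\bsk)+Y(\bsk')}\bigr]=0$.

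To conclude, since $S(\bsk)$ and $S(\bsk')$ are $\{-1,1\}$-valued with $\e[S(\bsk)]=\e[S(\bsk')]=\e[S(\bsk)S(\bsk')]=0$, for every $\sigma,\sigma'\in\{-1,1\}$,
$$\Pr\bigl(S(\bsk)=\sigma,\ S(\bsk')=\sigma'\bigr)=\tfrac14\Bigl(1+\sigma\,\e[S(\bsk)]+\sigma'\,\e[S(\bsk')]+\sigma\sigma'\,\e[S(\bsk)S(\bsk')]\Bigr)=\tfrac14,$$
which equals $\Pr(S(\bsk)=\sigma)\,\Pr(S(\bsk')=\sigma')$, establishing the asserted independence. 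I do not expect a real obstacle here: the whole argument is a short Fourier/character computation over $\ftwo$. The only two points needing a moment's care are the injectivity of the bit-pattern map (so that distinct $\bsk,\bsk'$ genuinely differ in some coordinate) and the observation that, although the precision is infinite, every sum in sight is finite because each $\kappa_j$ is finite.
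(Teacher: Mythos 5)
Your proof is correct and complete; the paper itself omits the argument, deferring to Lemma 4 of the cited one-dimensional paper, and that deferred argument is exactly the character-orthogonality computation you give (a nonzero $\ftwo$-linear form in the independent shift bits has mean-zero sign, applied to $Y(\bsk)$, $Y(\bsk')$ and $Y(\bsk)+Y(\bsk')$, plus the $\tfrac14(1+\sigma\e[S]+\sigma'\e[S']+\sigma\sigma'\e[SS'])$ identity). So you have, in effect, supplied the proof the paper points to rather than a different route, and the two points you flag --- injectivity of $\bsk\mapsto\vec{\bsk}$ and finiteness of all the bit sums --- are indeed the only details needing care.
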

\begin{proof}
The proof is similar to Lemma 4 of \cite{superpolyone} and is omitted here.
\end{proof}

\begin{theorem} $\e(\hat{\mu}_\infty)=\mu$ and
\begin{equation*}
    \var(\hat{\mu}_\infty)=\sum_{\bsk\in \natu_*^s}
    \Pr\Biggl(\,\sum_{j=1}^s \vec{k{}}_j^\tran M_j C_j=\bszero\Biggr)\hat{f}(\bsk)^2.
\end{equation*}
\end{theorem}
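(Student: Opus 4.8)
The plan is to start from the error decomposition in Theorem~\ref{thm:decomp},
$$
\hat{\mu}_{\infty}-\mu=\sum_{\bsk\in \natu_*^s}\bsone\Bigl\{\textstyle\sum_{j=1}^s \vec{k{}}_j^\tran M_j C_j=\bszero\Bigr\}\hat{f}(\bsk)\,S(\bsk),
$$
and compute mean and variance term by term. For the mean, note that the indicator depends only on the random matrices $M_j$ while $S(\bsk)=(-1)^{\sum_j\vec{k{}}_j^\tran\vec{D}_j}$ depends only on the independent digital shift vectors $D_j$; hence the indicator and $S(\bsk)$ are independent, and by Lemma~\ref{lem:uncorrelated} we have $\e(S(\bsk))=0$ for every $\bsk\in\natu_*^s$. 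Taking expectations term by term (with a dominated-convergence / $L^2$ justification from $f\in L^2$, so that $\sum_\bsk\hat f(\bsk)^2<\infty$) gives $\e(\hat\mu_\infty)=\mu$.

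For the variance, since $\e(\hat\mu_\infty)=\mu$ we have $\var(\hat\mu_\infty)=\e\bigl[(\hat\mu_\infty-\mu)^2\bigr]$, and I would expand the square as a double sum over $\bsk,\bsk'\in\natu_*^s$ of
$$
\e\Bigl[\bsone\{\cdots\bsk\cdots\}\bsone\{\cdots\bsk'\cdots\}\,\hat f(\bsk)\hat f(\bsk')\,S(\bsk)S(\bsk')\Bigr].
$$
Again using independence of the $M_j$-block from the $D_j$-block, this factors as $\hat f(\bsk)\hat f(\bsk')\,\e[\bsone\{\cdots\}\bsone\{\cdots\}]\,\e[S(\bsk)S(\bsk')]$. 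By Lemma~\ref{lem:uncorrelated}, $\e[S(\bsk)S(\bsk')]=0$ for $\bsk\ne\bsk'$ and $=1$ for $\bsk=\bsk'$ (since $S(\bsk)^2=1$). So every off-diagonal term vanishes, leaving
$$
\var(\hat\mu_\infty)=\sum_{\bsk\in\natu_*^s}\hat f(\bsk)^2\,\e\Bigl[\bsone\Bigl\{\textstyle\sum_{j=1}^s\vec{k{}}_j^\tran M_jC_j=\bszero\Bigr\}\Bigr]=\sum_{\bsk\in\natu_*^s}\Pr\Bigl(\textstyle\sum_{j=1}^s\vec{k{}}_j^\tran M_jC_j=\bszero\Bigr)\hat f(\bsk)^2,
$$
which is the claimed formula.

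The main obstacle is rigor of the term-by-term manipulations: the decomposition~\eqref{eqn:errordecomposition} holds in $L^2$, and one must justify interchanging expectation with the infinite sum. I would handle this by truncating to $\bsk$ with $\lceil\bskappa\rceil$ bounded (so the sum is finite and all steps are elementary), observing that the truncated estimator converges to $\hat\mu_\infty$ in $L^2$ as the bound grows — using $\sum_\bsk\hat f(\bsk)^2<\infty$ and the fact that the indicator is bounded by $1$ — and then passing to the limit. A secondary point worth stating explicitly is the independence of the $\sigma$-field generated by $(M_1,\dots,M_s)$ from that generated by $(D_1,\dots,D_s)$, which is immediate from the construction in~\eqref{eq:thesketch} where all the $\dunif\{0,1\}$ entries are mutually independent; this is what licenses both the factorization in the variance computation and the vanishing of the cross terms via Lemma~\ref{lem:uncorrelated}.
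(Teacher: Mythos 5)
Your proof is correct and follows essentially the same route as the paper: both start from the decomposition of Theorem~\ref{thm:decomp} and reduce everything to Lemma~\ref{lem:uncorrelated}, the paper by conditioning on $\bsM$ and computing the conditional mean and variance, you by expanding the double sum unconditionally and killing the off-diagonal terms via $\e[S(\bsk)S(\bsk')]=0$ --- the same computation in slightly different packaging. Your explicit attention to the $L^2$ interchange of expectation and infinite sum is a point the paper passes over silently, and it is handled correctly by your truncation argument.
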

\begin{proof}
Let $\bsM=(M_1,M_2,\dots,M_s)$.
By equation~\eqref{eqn:errordecomposition} and 
Lemma~\ref{lem:uncorrelated}, 
\begin{equation}\label{eqn:conditionallyunbiased}
    \e(\hat{\mu}_\infty-\mu\giv \bsM)=\sum_{\bsk\in \natu_*^s}\bsone\Bigl\{\sum_{j=1}^s \vec{k{}}_j^\tran M_j C_j=\bszero\Bigl\}\hat{f}(\bsk) \e(S(\bsk))=0
\end{equation}
and
\begin{align}\label{eqn:conditionalvariance}
    \var(\hat{\mu}_\infty-\mu\giv \bsM)&=\sum_{\bsk\in \natu_*^s}\bsone\Bigl\{\sum_{j=1}^s \vec{k{}}_j^\tran M_j C_j=\bszero\Bigl\}\hat{f}(\bsk)^2 \var(S(\bsk))\notag\\
    &=\sum_{\bsk\in \natu_*^s}\bsone\Bigl\{\sum_{j=1}^s \vec{k{}}_j^\tran M_j C_j=\bszero\Bigl\}\hat{f}(\bsk)^2.
\end{align}
The conclusion follows by taking expectations with respect to $M$.
\end{proof}

To illustrate when the median-of-means can outperform the mean-of-means, 
suppose we can find a rare event $\ca$ such that 
$\e(\hat{\mu}_\infty\giv\ca^c)=\mu$ and 
$\var(\hat{\mu}_\infty\giv\ca^c)\ll\var(\hat{\mu}_\infty)$. 
Then if we generate independent copies of $\hat{\mu}_\infty$ and look 
at the histogram, we should see a cluster with bandwidth 
comparable to $\sqrt{\var(\hat{\mu}_\infty\giv \ca^c)}$ 
around $\mu$. Those 
$\hat{\mu}_\infty$ for which $\ca$ happens could
well be far into the tails away from $\mu$. 
In such a setting, the sample median is robust with
respect to the event $\ca$ 
and has mean square error close to $\var(\hat{\mu}_\infty\giv \ca^c)$. 
We make this intuition precise with the following lemma.

\begin{lemma} 
Let $\ca$ be an event with
$\Pr(\ca)\leq \delta$ and 
$\e(\hat{\mu}_\infty\giv \ca^c)=\mu$. Then the sample median 
$\hat{\mu}^{(r)}_{\infty}$ of  $2r-1$ independently generated values of 
$\hat{\mu}_\infty$ using a digital $(t,m,s)$-net satisfies
\begin{align}\label{eq:msebound}
    &\e((\hat{\mu}^{(r)}_{\infty}-\mu)^2)\leq {\Pr(\ca^c)}
\var(\hat{\mu}_\infty\giv \ca^c){\delta^{-1}}+(8\delta)^r \Delta^2_n
\end{align}
where
\begin{align*}\Delta_n=\min\biggl(
\omega_f(\sqrt{s}),
\frac{V_{\hk}(f)}{2^{m-t}}\sum_{i=0}^{s-1}{m-t \choose i}\biggr),
\end{align*}
$\omega_f$ gives the modulus of continuity for $f$,
and $V_{\hk}(f)$ is the total variation of $f$ in the sense of Hardy and Krause.
\end{lemma}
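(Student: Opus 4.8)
The plan is to bound $|\hat{\mu}^{(r)}_{\infty}-\mu|$ by a sure envelope $\Delta_n$, then run a median-of-means tail estimate against the rare event $\ca$, and finally integrate $\e(X^2)=\int\Pr(X^2>u)\,\mrd u$. Write $\sigma_c^2=\var(\hat{\mu}_\infty\mid\ca^c)$. For the deterministic envelope, first note that $|\hat{\mu}_\infty-\mu|\le\omega_f(\sqrt{s})$ always, because $\hat{\mu}_\infty$ is an average of values of $f$ on $[0,1]^s$ so both $\hat{\mu}_\infty$ and $\mu$ lie in $[\inf f,\sup f]$; and $|\hat{\mu}_\infty-\mu|\le V_{\hk}(f)D^*_n$ by Koksma--Hlawka, where, because a Matousek linear scramble with a digital shift of a digital $(t,m,s)$-net is again a $(t,m,s)$-net in base $2$, the star discrepancy obeys the classical bound $D^*_n\le 2^{-(m-t)}\sum_{i=0}^{s-1}\binom{m-t}{i}$. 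Together these give $|\hat{\mu}_\infty-\mu|\le\Delta_n$ surely, and since the sample median of $2r-1$ numbers in $[\mu-\Delta_n,\mu+\Delta_n]$ lies in that interval, $|\hat{\mu}^{(r)}_{\infty}-\mu|\le\Delta_n$ surely as well.

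For the tail estimate, fix $\epsilon>0$. Conditioning a single replicate on $\ca$ and applying Chebyshev under $\ca^c$ (where $\e(\hat{\mu}_\infty\mid\ca^c)=\mu$ by hypothesis) gives $\Pr(|\hat{\mu}_\infty-\mu|>\epsilon)\le\Pr(\ca)+\Pr(\ca^c)\sigma_c^2/\epsilon^2\le\delta+\sigma_c^2/\epsilon^2$. The sample median of $2r-1$ i.i.d.\ replicates exceeds $\mu+\epsilon$ only if at least $r$ of them do, and symmetrically below $\mu-\epsilon$; a union bound over the $\binom{2r-1}{r}<4^{r-1}$ size-$r$ subsets, using independence, yields $\Pr(|\hat{\mu}^{(r)}_{\infty}-\mu|>\epsilon)\le 2\cdot4^{r-1}(\delta+\sigma_c^2/\epsilon^2)^r$.

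Now I would write $\e((\hat{\mu}^{(r)}_{\infty}-\mu)^2)=\int_0^{\Delta_n^2}\Pr((\hat{\mu}^{(r)}_{\infty}-\mu)^2>u)\,\mrd u$ (the upper limit is finite by the envelope) and split at $u_0=\sigma_c^2/\delta$. On $(u_0,\Delta_n^2]$ one has $\sigma_c^2/u\le\delta$, so the tail estimate gives $\Pr(\cdot>u)\le 2\cdot4^{r-1}(2\delta)^r=\tfrac12(8\delta)^r$, contributing at most $(8\delta)^r\Delta_n^2$. On $[0,u_0]$, bounding the integrand crudely by $1$ already produces the summand $\sigma_c^2/\delta$; to land on the stated $\Pr(\ca^c)\sigma_c^2\delta^{-1}$ I would instead condition on the number $B$ of replicates on which $\ca$ occurs. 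For $B\ge r$ use the $\Delta_n^2$ envelope against $\Pr(B\ge r)\le\binom{2r-1}{r}\delta^r$; for $B<r$ there are at least $r$ ``good'' replicates and $\hat{\mu}^{(r)}_{\infty}$ lies between their $(r-B)$-th and $r$-th order statistics, whose deviations from $\mu$ are controlled by Chebyshev on the good replicates, with $\Pr(B=b)\le\binom{2r-1}{b}\delta^b\Pr(\ca^c)^{2r-1-b}$ supplying the $\Pr(\ca^c)$ factor and the terms with $b$ close to $r-1$ (where only a first-power, logarithmically integrable tail is available) damped by a probability of order $\delta^{r-1}$.

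I expect the main obstacle to be exactly this last bit of constant bookkeeping: the structural steps (sure envelope, Chebyshev against $\ca$, ``the median exceeds a level only if $r$ of $2r-1$ replicates do'') are routine, but getting the answer to be precisely $\Pr(\ca^c)\sigma_c^2\delta^{-1}+(8\delta)^r\Delta_n^2$ pins down the split point $u_0=\sigma_c^2/\delta$ and, for the $\Pr(\ca^c)$ weight, the conditioning on $B$ together with care near $B=r-1$, where the order-statistic tail bounds degrade to first powers. A secondary, purely citational point is fixing the star-discrepancy bound for $(t,m,s)$-nets and the fact that linear scrambling preserves the net property.
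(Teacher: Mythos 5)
Your ingredients are the right ones and largely coincide with the paper's: the sure envelope $\Delta_n$ (including the point that the scrambled points remain a $(t,m,s)$-net, so Koksma--Hlawka plus the classical star-discrepancy bound applies), a Chebyshev/Markov step conditional on $\ca^c$ using the hypothesis $\e(\hat{\mu}_\infty\giv\ca^c)=\mu$, and the ``the median is bad only if at least $r$ of the $2r-1$ replicates are bad'' union bound over $\binom{2r-1}{r}$ subsets. Where you diverge is in assembling these into the MSE: you integrate the tail $\e(X^2)=\int\Pr(X^2>u)\,\mathrm{d}u$ and split at $u_0=\sigma_c^2/\delta$, which, as you yourself note, yields $\sigma_c^2\delta^{-1}+(8\delta)^r\Delta_n^2$, i.e.\ the stated bound \emph{without} the factor $\Pr(\ca^c)$ in the first term. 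Since $\Pr(\ca^c)\le 1$, this is strictly weaker than the claimed inequality, so the statement is not proven as written; and your proposed repair (conditioning on the number $B$ of replicates hit by $\ca$, order statistics of the good replicates, special handling near $B=r-1$) is only sketched, would be genuinely delicate to push through, and is in any case far more machinery than is needed.

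The missing observation is that no integration over levels is required: work at the single threshold $T^2=\Pr(\ca^c)\var(\hat{\mu}_\infty\giv\ca^c)\,\delta^{-1}$. Markov's inequality applied to $|\hat{\mu}_\infty-\mu|^2$ conditionally on $\ca^c$ (whose conditional mean is exactly $\var(\hat{\mu}_\infty\giv\ca^c)$ by the hypothesis $\e(\hat{\mu}_\infty\giv\ca^c)=\mu$) gives $\Pr\bigl(|\hat{\mu}_\infty-\mu|^2\ge T^2\giv\ca^c\bigr)\le\delta/\Pr(\ca^c)$, hence unconditionally $\Pr\bigl(|\hat{\mu}_\infty-\mu|^2\ge T^2\bigr)\le\delta+\Pr(\ca)\le 2\delta$. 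Your own median/union-bound step, run at this one level, then gives $\Pr\bigl(|\hat{\mu}^{(r)}_{\infty}-\mu|^2\ge T^2\bigr)\le\binom{2r-1}{r}(2\delta)^r\le(8\delta)^r$, and the result follows from the crude two-case bound $\e\bigl((\hat{\mu}^{(r)}_{\infty}-\mu)^2\bigr)\le T^2+(8\delta)^r\Delta_n^2$, using your sure envelope on the exceptional event. This is exactly the paper's proof; it is shorter than the tail integration, and it is precisely what produces the $\Pr(\ca^c)$ factor, because that factor is built into the choice of threshold rather than recovered afterwards.
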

\begin{proof}
Conditionally on $\ca^c$, we can apply Markov's inequality to get
$$\Pr\Bigl(|\hat{\mu}_\infty-\mu|^2\geq\frac{\Pr(\ca^c)}{\delta}\var(\hat{\mu}_\infty\giv \ca^c)\!\bigm|\! \ca^c\Bigr)\leq \frac{\delta}{\Pr(\ca^c)}.$$
Hence
$$\Pr\Bigl(|\hat{\mu}_\infty-\mu|^2\geq\frac{\Pr(\ca^c)}{\delta}\var(\hat{\mu}_\infty\giv \ca^c)\Bigr)\leq \frac{\delta}{\Pr(\ca^c)} \Pr(\ca^c)+\Pr(\ca)\leq 2\delta.$$
The rest of the proof is similar to that of Corollary~\ref{cor:medianMSE}. In particular, 
$$\Pr\Bigl(|\hat{\mu}^{(r)}_{\infty}-\mu|^2\geq \frac{\Pr(\ca^c)}{\delta}\var(\hat{\mu}_\infty\giv \ca^c)\Bigr)\leq {2r-1\choose r}(2\delta)^r\leq (8\delta)^r. $$
When the `bad event' $\ca$ happens, we can bound the error in two ways: 
first it is clear that both $\hat{\mu}^{(r)}_{\infty}$ and $\mu$ are between 
$\inf_{\bsx\in [0,1]^s}f(\bsx)$ and $\sup_{\bsx\in [0,1]^s}f(\bsx)$, so their
difference is no larger than 
$\omega_f(\sqrt{s})=\sup_{\bsx\in [0,1]^s}f(\bsx)-\inf_{\bsx\in [0,1]^s}f(\bsx)$. Second, if $f$ has 
finite Hardy–Krause variation, we can apply the Koksma–Hlawka inequality \cite{hick:2014} to 
conclude that $|\hat{\mu}_\infty-\mu|\leq V_{\hk}(f)D^*_n(\bsx_0,\dots,\bsx_{n-1})$ where 
$D^*_n(\cdot)$ denotes the star discrepancy.
Because this is true for all $\hat{\mu}_\infty$, it is also true for 
$\hat{\mu}^{(r)}_{\infty}$. Because $\bsx_0,\dots,\bsx_{n-1}$ is a $(t,m,s)$-net 
regardless of the scrambling, we can apply the bound 
$$D^*_n(\bsx_0,\dots,\bsx_{n-1})\leq \frac{1}{2^{m-t}}\sum_{i=0}^{s-1}{m-t \choose i}$$
from Corollary 5.3 of \cite{dick:pill:2010}. By combining the two bounds, 
we derive $|\hat{\mu}^{(r)}_{\infty}-\mu|\leq \Delta_n$ and hence the bound 
on $\e(\hat{\mu}^{(r)}_{\infty}-\mu)^2$.
\end{proof}

The $(8\delta)^r \Delta^2_n$ term in the bound~\eqref{eq:msebound} is exponentially small in $r$ if $\delta<1/8$.
As shown in Section 3 of \cite{superpolyone}, $\var(\hat{\mu}_\infty)$ is in general $\Omega(n^{-3})$ for smooth functions $f$. Hence we only need $r\geq C_*m$ for some $C_*>0$ to make $(8\delta)^r D^2_n\ll \var(\hat{\mu}_\infty)$. With the same computational effort, the mean-of-means has variance equal to $\var(\hat{\mu}_\infty)/(2r-1)$. So heuristically, median-of-means can significantly outperform mean-of-means in terms of MSE if 
there exists an event $\ca$ such that $\Pr(\ca)\leq \delta<1/8$ and $\var(\hat{\mu}_\infty\giv \ca^c)\ll \var(\hat{\mu}_\infty)/m$. 


Motivated by Corollary~\ref{cor:concentration}, one way to choose $\ca$ is to specify a set of frequencies $K\subseteq \natu^s_*$ and let $\ca=\{\sum_{j=1}^s \vec{k{}}_j^\tran M_j C_j=\bszero \text{ for some } \bsk\in K\}$. We know that $\e(\hat{\mu}_\infty\giv \ca^c)=\mu$ because equation~\eqref{eqn:conditionallyunbiased} shows that $\hat{\mu}_\infty$ is unbiased conditionally on $\bsM=(M_1,\dots,M_s)$ and $\ca$ belongs to the $\sigma$-algebra generated by $\bsM$. Moreover, as long as $\sum_{\bsk\in K}\Pr\bigl(\sum_{j=1}^s \vec{k{}}_j^\tran M_j C_j=\bszero\bigl)\leq \delta$, we know by the union bound that $\Pr(\ca)\leq \delta$ as well. According to equation~\eqref{eqn:conditionalvariance}, 
\begin{align*}
    \Pr(\ca^c)\var(\hat{\mu}_\infty\giv \ca^c)&=\Pr(\ca^c)\e(\var(\hat{\mu}_\infty-\mu\giv \bsM)\giv \ca^c)\\
    &=\sum_{\bsk\in \natu_*^s}\Pr(\ca^c)\Pr\biggl(\,\sum_{j=1}^s \vec{k{}}_j^\tran M_j C_j=\bszero\giv \ca^c\biggr)\hat{f}(\bsk)^2\\
    &\leq \sum_{\bsk\in \natu_*^s\setminus K}\Pr\biggl(\,\sum_{j=1}^s \vec{k{}}_j^\tran M_j C_j=\bszero\biggr)\hat{f}(\bsk)^2.
\end{align*}
So in principle, if one knows all $\hat{f}(\bsk)^2$ and $\Pr\bigl(\sum_{j=1}^s \vec{k{}}_j^\tran M_j C_j=\bszero\bigl)$, then one can find a good candidate $\ca$ by solving the following combinatorial optimization problem:
\begin{equation*}
\begin{aligned}
\max_{K\subseteq \natu^s_*} \quad & \sum_{\bsk\in K}\Pr\biggl(\,\sum_{j=1}^s \vec{k{}}_j^\tran M_j C_j=\bszero\biggr)\hat{f}(\bsk)^2\\
\textrm{s.t.} \quad & \sum_{\bsk\in K}\Pr\biggl(\,\sum_{j=1}^s \vec{k{}}_j^\tran M_j C_j=\bszero\biggr)\leq \delta.\\
\end{aligned}
\end{equation*}
In particular, if $\var(\hat{\mu}_\infty)$ is dominated by a few $\bsk$ with large $\hat{f}(\bsk)^2$, then we should see a significant variance reduction after we condition on the $\ca^c$ specified by the above optimization problem.

To make the problem more tractable, we examine one case where our function $f$ is effectively low-dimensional. Suppose there are a few components $x_j$ that contribute most of the variability to $f$. More precisely, let
$$f_1(\bsx) = \e( f(\bsx)\giv x_j, j\in u)-\mu \quad\text{and}\quad f_2(\bsx) = f(\bsx)-f_1(\bsx)-\mu.$$
Then $\mu$, $f_{1}$, and $f_{2}$ are orthogonal in the 
$L^2[0,1)^s$ inner product, so that $\sigma^2(f)=\sigma^2(f_1)+\sigma^2(f_{2})$. We assume that $\sigma^2(f_1)\gg \sigma^2(f_{2})$, and then $f_1$ captures most of the variance of $f$. 

Given such a function, it is natural to choose $K=\{\bsk\in\natu_*^s\mid\supp(\bsk)\subseteq u\}$ because their associated $f(\bsk)^2$ are relatively large. Then,
Corollary~\ref{cor:tmsbound} can be strengthened in the following way:

\begin{lemma}\label{lem:finitesampletmsbound}
For non-empty $u\subseteq 1{:}s$, define $\natu^{u}\subset\natu^s$ to be the set of
$\bsk\in\natu_*^s$ with $\supp(\bsk)=u$.
Further define
\begin{align*}
t^*_u &=
m+1-\min_{\bsk\in\natu^{u}}\bigl\{ \Vert\lceil\bskappa\rceil\Vert_1
\mid
C^{\lceil\bskappa\rceil}\ \mathrm{not\ of\ full\ rank\ }
\bigr\}.
\end{align*}
If $u=\emptyset$, we conventionally define $t^*_u=0$. Then
$$\Pr\bigg(\sum_{j=1}^s \vec{k{}}_j^\tran M_j C_j=\bszero\bigg)\leq 2^{-m+t^*_{\supp(\bsk)}+|\supp(\bsk)|}.$$
\end{lemma}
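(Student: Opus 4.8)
The plan is to mimic the proof of Corollary~\ref{cor:tmsbound}, but to replace the crude use of the global net quality parameter $t$ by the quantity $t^*_u$ that is tailored to the support $u=\supp(\bsk)$. First I would dispose of the trivial cases exactly as in Lemma~\ref{lem:gaincoef}: if $\max_{1\le j\le s}\lceil\kappa_j\rceil>m$ the probability is $2^{-m}$, which is certainly at most $2^{-m+t^*_u+|u|}$ since $t^*_u\ge 0$ whenever $u\ne\emptyset$ (and $|u|\ge 1$); and if the row-space condition of Lemma~\ref{lem:gaincoef} fails the probability is $0$, so the bound holds vacuously. The remaining case is $\max_j\lceil\kappa_j\rceil\le m$ together with $\sum_{j\in u}C_j(\lceil\kappa_j\rceil,:)\in\row(C^{\lceil\bskappa\rceil-\bsone\{u\}})$, and there Lemma~\ref{lem:gaincoef} gives the probability exactly $2^{-\rank(C^{\lceil\bskappa\rceil-\bsone\{u\}})}$, so it suffices to show
$$
\rank\bigl(C^{\lceil\bskappa\rceil-\bsone\{u\}}\bigr)\ge m-t^*_u-|u|.
$$

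The key observation is that in this remaining case $C^{\lceil\bskappa\rceil}$ cannot have full rank: the row $\sum_{j\in u}C_j(\lceil\kappa_j\rceil,:)$ lies in the span of the other rows, so those rows together with this one are linearly dependent, and that collection of rows is precisely the row set of $C^{\lceil\bskappa\rceil}$. Hence $\bsk\in\natu^u$ is one of the indices over which the minimum in the definition of $t^*_u$ is taken, so $\Vert\lceil\bskappa\rceil\Vert_1\ge m+1-t^*_u$, i.e.\ the matrix $C^{\lceil\bskappa\rceil}$ has at least $m+1-t^*_u$ rows. Now I need a lower bound on $\rank(C^{\lceil\bskappa\rceil})$: since $C^{\lceil\bskappa\rceil}$ is obtained by stacking the first $\lceil\kappa_j\rceil$ rows of each nonsingular $C_j$, $j\in u$, any proper sub-collection of its rows of the form $C^{(q_1,\dots,q_s)}$ with $q_j\le\lceil\kappa_j\rceil$ and at least one $q_j<\lceil\kappa_j\rceil$ has $\Vert(q_1,\dots,q_s)\Vert_1<\Vert\lceil\bskappa\rceil\Vert_1$, so by minimality of $t^*_u$ every such sub-collection spans a space of full rank equal to its number of rows; in particular $C^{\lceil\bskappa\rceil-\bsone\{u\}}$ has rank exactly $\Vert\lceil\bskappa\rceil\Vert_1-|u|\ge m+1-t^*_u-|u|>m-t^*_u-|u|$. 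That already gives the desired inequality without even needing to bound the rank of $C^{\lceil\bskappa\rceil}$ itself.

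So the clean argument is: in the nontrivial case, $C^{\lceil\bskappa\rceil}$ is rank-deficient, hence $\bsk$ is admissible in the minimization defining $t^*_u$, hence $C^{\lceil\bskappa\rceil-\bsone\{u\}}$ has at least $\Vert\lceil\bskappa\rceil\Vert_1-|u|\ge m-t^*_u+1-|u|$ rows, and by the minimality these rows are linearly independent (removing one row from each block leaves every proper truncation below the minimal dependent size, hence full rank), giving $\rank(C^{\lceil\bskappa\rceil-\bsone\{u\}})= \Vert\lceil\bskappa\rceil\Vert_1-|u|\ge m-t^*_u-|u|$, and therefore the probability is at most $2^{-(m-t^*_u-|u|)}=2^{-m+t^*_u+|u|}$, as claimed; the case $u=\emptyset$ is excluded since $\bsk\in\natu^s_*$ forces $u\ne\emptyset$.

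The main obstacle I anticipate is the step asserting that $C^{\lceil\bskappa\rceil-\bsone\{u\}}$ has \emph{full} rank, i.e.\ that its $\Vert\lceil\bskappa\rceil\Vert_1-|u|$ rows are linearly independent. This needs care: the definition of $t^*_u$ involves a minimum over $\bsk'\in\natu^u$ with $C^{\lceil\bskappa'\rceil}$ rank-deficient, and I must argue that \emph{any} rank deficiency among the rows of $C^{\lceil\bskappa\rceil-\bsone\{u\}}$ would produce a strictly smaller admissible value of $\Vert\lceil\bskappa'\rceil\Vert_1$ (by taking the appropriate $\bsk'$ whose $\lceil\kappa'_j\rceil$ truncate the blocks down to where the dependency first appears), contradicting minimality; one has to check that the resulting $\bsk'$ still has support exactly $u$ (which requires each truncated block to keep at least one row, i.e.\ each $\lceil\kappa'_j\rceil\ge1$) and handle the bookkeeping of which row of which block is the ``first'' dependent one. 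This is the place where I would spell out the details rather than wave at it; the probabilistic content is entirely inherited from Lemma~\ref{lem:gaincoef}.
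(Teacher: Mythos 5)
There is a genuine gap, and it sits exactly where you flagged it. Your claim that every proper truncation $C^{(q_1,\dots,q_s)}$ of $C^{\lceil\bskappa\rceil}$ is of full rank ``by minimality of $t^*_u$'', and hence that $C^{\lceil\bskappa\rceil-\bsone\{u\}}$ has rank exactly $\Vert\lceil\bskappa\rceil\Vert_1-|u|$, is false. The definition of $t^*_u$ only guarantees full rank for matrices $C^{\lceil\bskappa'\rceil}$ with $\supp(\bsk')=u$ and $\Vert\lceil\bskappa'\rceil\Vert_1\le m-t^*_u$; it says nothing about truncations whose row-sum lies between $m+1-t^*_u$ and $\Vert\lceil\bskappa\rceil\Vert_1-1$. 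The $\bsk$ at hand need not attain the minimum in the definition of $t^*_u$ --- rank-deficiency of $C^{\lceil\bskappa\rceil}$ only gives $\Vert\lceil\bskappa\rceil\Vert_1\ge m+1-t^*_u$ --- so proper sub-collections can easily be rank-deficient; indeed whenever $\Vert\lceil\bskappa\rceil\Vert_1-|u|>m$ the matrix $C^{\lceil\bskappa\rceil-\bsone\{u\}}$ has more rows than columns and cannot have full row rank. The repair you sketch in your last paragraph does not close the gap: a dependency among the rows of $C^{\lceil\bskappa\rceil-\bsone\{u\}}$ produces an admissible rank-deficient truncation with row-sum smaller than $\Vert\lceil\bskappa\rceil\Vert_1$, but not necessarily smaller than the minimum $m+1-t^*_u$, so no contradiction with the definition of $t^*_u$ arises.

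The paper argues differently and you should too: do not try to show the reduced matrix is full rank, but lower-bound $\rank(C^{\lceil\bskappa\rceil})$ itself. In the nontrivial case $C^{\lceil\bskappa\rceil}$ is rank-deficient, so $\Vert\lceil\bskappa\rceil\Vert_1\ge m+1-t^*_u$; if $m-t^*_u\ge|u|$ one can pick $q_j$ with $1\le q_j\le\lceil\kappa_j\rceil$ for $j\in u$, $q_j=0$ otherwise, and $\sum_j q_j=m-t^*_u$, and this truncation is $C^{\lceil\bskappa'\rceil}$ for some $\bsk'\in\natu^{u}$ with row-sum strictly below the minimal rank-deficient value, hence of full rank $m-t^*_u$; therefore $\rank(C^{\lceil\bskappa\rceil})\ge m-t^*_u$ (when $m-t^*_u<|u|$ the desired bound is trivial). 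Since deleting one row per $j\in u$ lowers the rank by at most $|u|$, $\rank(C^{\lceil\bskappa\rceil-\bsone\{u\}})\ge m-t^*_u-|u|$, and Lemma~\ref{lem:gaincoef} then gives $\Pr\bigl(\sum_{j=1}^s \vec{k{}}_j^\tran M_j C_j=\bszero\bigr)\le 2^{-m+t^*_u+|u|}$. Your treatment of the trivial cases is fine (with the minor caveat that $t^*_u$ can be negative when $|u|>m+1$, though $t^*_u+|u|\ge0$ always holds, which is all that is needed), but the central rank step must be rewritten along these lines.
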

\begin{proof}
The proof is basically the same as Corollary~\ref{cor:tmsbound}. By the definition of $t^*_u$, a rank-deficient $C^{\lceil\bskappa\rceil}$ must contains $m-t^*_{\supp(\bsk)}$ linearly independent rows, so
    $$\rank(C^{\lceil\bskappa\rceil-\bsone\{\supp(\bsk)\}})\geq \rank(C^{\lceil\bskappa\rceil})-|\supp(\bsk)|\geq m-t^*_{\supp(\bsk)}-|\supp(\bsk)|$$
    which proves the conclusion.
\end{proof}
\begin{remark} To compare $t^*_u$ with $t$, consider for instance a Sobol' sequence constructed by the $s$ lowest order irreducible polynomials. As shown in Section 4.5 of \cite{sobol67} 
the order of irreducible polynomials grows roughly like $\log(s)$ and $t$ is consequently $O(s\log(s))$. The supremum of $t^*_u$ on the other hand, grows no faster than $|u|\log(s)$, which is potentially much smaller than $t$. 
\end{remark}

Now we can prove the finite sample version of Corollary~\ref{cor:concentration}.
\begin{theorem}\label{thm:finitesampleconcentration}
For non-empty $u\subseteq1{:}d$, let $t_u=\max_{v\subseteq u}t^*_v$. For $\delta>0$, let $N^*_m$ be the largest integer $N$ satisfying
\begin{align*}
&2^{t^*_u+|u|}{m + |u| \choose |u|-1}N\exp\Bigl(\pi\sqrt{\frac{|u|N}{3}}\Bigr)\leq \delta 2^m,
\intertext{and}
&\sqrt{3(|u|-1)N}\leq \frac{\pi}{2}(m-t_u).
\end{align*}
Then
$$\Pr\bigg(\sum_{j=1}^s \vec{k{}}_j^\tran M_j C_j=\bszero \text{ for some } \bsk \ne\bszero \text{ with } \supp(\bsk)\subseteq u, \ \Vert\bskappa\Vert_1\leq N^*_m+m-t_u\bigg)\leq \delta.$$
\end{theorem}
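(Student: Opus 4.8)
The plan is to mimic the proof of Corollary~\ref{cor:concentration}, but replacing the global bound of Corollary~\ref{cor:tmsbound} with the sharper, support-restricted bound of Lemma~\ref{lem:finitesampletmsbound}, and replacing the asymptotic counting estimate with the explicit finite-$N$ count from Theorem~\ref{thm:finiteNbound} in the appendix. First I would apply the union bound over all $\bsk\ne\bszero$ with $\supp(\bsk)\subseteq u$ and $\Vert\bskappa\Vert_1\le N^*_m+m-t_u$. Since $\supp(\bsk)\subseteq u$ forces $|\supp(\bsk)|\le|u|$ and, more importantly, $t^*_{\supp(\bsk)}\le t_u$ by the definition $t_u=\max_{v\subseteq u}t^*_v$, Lemma~\ref{lem:finitesampletmsbound} gives $\Pr(\sum_j\vec{k}_j^\tran M_jC_j=\bszero)\le 2^{-m+t_u+|u|}$ uniformly over the relevant $\bsk$. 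Wait---the statement of the theorem is phrased with $t^*_u$ in the first displayed inequality and $t_u$ in the second, so I need to be slightly careful: actually the cleanest route is to bound $t^*_{\supp(\bsk)}+|\supp(\bsk)|\le t_u+|u|$ throughout, which is what the second constraint $\sqrt{3(|u|-1)N}\le\frac{\pi}{2}(m-t_u)$ is calibrated for. The probability is then at most $2^{-m+t_u+|u|}$ times the number of admissible $\bsk$.

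Next I would count $\{\bsk\ne\bszero : \supp(\bsk)\subseteq u,\ \Vert\bskappa\Vert_1\le M\}$ where $M=N^*_m+m-t_u$. Every such $\bsk$ is obtained by choosing a value $\Vert\bskappa\Vert_1=N'\le M$ and a configuration of the one-bits of $(\kappa_j)_{j\in u}$ summing to $N'$; by Theorem~\ref{thm:finiteNbound} the number of $\bsk$ with $\supp(\bsk)\subseteq u$ and $\Vert\bskappa\Vert_1=N'$ is at most (a constant times) $\frac{1}{\sqrt{N'}}\exp(\pi\sqrt{|u|N'/3})$. Summing a geometric-type series in $N'$ up to $M$, the dominant term is the last one, giving a count $\lesssim \binom{m+|u|}{|u|-1}M\exp(\pi\sqrt{|u|M/3})$ after absorbing lower-order factors; here the binomial factor accounts for the at-most-$\binom{m+|u|}{|u|-1}$ ways the bits can be distributed once we have also allowed $\Vert\lceil\bskappa\rceil\Vert_1$ to range up to roughly $m-t_u$, and the second constraint of the theorem is exactly what ensures $\exp(\pi\sqrt{|u|M/3})$ is controlled by $2^{t^*_u}$-type slack — i.e., it guarantees $N^*_m\ge 0$ is meaningful and that replacing $M$ by $N^*_m$ inside the exponential only costs the $\binom{m+|u|}{|u|-1}$ factor. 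By the definition of $N^*_m$ as the largest integer with $2^{t^*_u+|u|}\binom{m+|u|}{|u|-1}N\exp(\pi\sqrt{|u|N/3})\le\delta 2^m$, this entire product is $\le\delta 2^m$, so multiplying by $2^{-m+t_u+|u|}$ (and using $t^*_u\le t_u$) yields a bound of $\delta$.

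The main obstacle I anticipate is the bookkeeping that reconciles the two separate conditions defining $N^*_m$: the first is a direct "union-bound budget" inequality, while the second, $\sqrt{3(|u|-1)N}\le\frac{\pi}{2}(m-t_u)$, is needed so that the admissible frequency range $\Vert\bskappa\Vert_1\le N^*_m+m-t_u$ still only involves $\bsk$ for which $\Vert\lceil\bskappa\rceil\Vert_1\le m$ (so that the rank-deficiency / Lemma~\ref{lem:finitesampletmsbound} machinery applies rather than the trivial $2^{-m}$ branch of Lemma~\ref{lem:gaincoef}), and so that the count of such $\bsk$ with $\Vert\bskappa\Vert_1$ large but $\Vert\lceil\bskappa\rceil\Vert_1$ bounded does not blow up. Concretely, one shows that if $\Vert\bskappa\Vert_1\le N^*_m+m-t_u$ and $|u|\ge2$, then the "excess" $\Vert\bskappa\Vert_1-\Vert\lceil\bskappa\rceil\Vert_1$ is controlled using $\Vert\bskappa\Vert_1\ge\frac{1}{2|u|}\Vert\bskappa\Vert_0^2$ together with $\Vert\lceil\bskappa\rceil\Vert_1\ge\Vert\bskappa\Vert_0$, and the second constraint makes this compatible with $\Vert\lceil\bskappa\rceil\Vert_1\le m-t_u+\text{(small)}$; the case $|u|=1$ is degenerate (the inequality $\sqrt{3(|u|-1)N}\le\cdots$ is vacuous) and handled directly since then $\Vert\bskappa\Vert_1=\Vert\kappa_j\Vert_1$ for the single index $j\in u$ and the counting is elementary. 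Once these range issues are pinned down, the rest is the routine union-bound-plus-geometric-series estimate already rehearsed in Corollary~\ref{cor:concentration} and Theorem~\ref{thm:superconvergence}.
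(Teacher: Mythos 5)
Your overall architecture (union bound, Lemma~\ref{lem:finitesampletmsbound} for the per-frequency probability, appendix counting results) is the same as the paper's, but two steps do not go through as you describe. First, the $t^*_u$ versus $t_u$ reconciliation is resolved in the wrong direction. From $t^*_{\supp(\bsk)}\le t_u$ you obtain the per-frequency bound $2^{-m+t_u+|u|}$, while the budget inequality defining $N^*_m$ carries only the factor $2^{t^*_u+|u|}$; since $t_u=\max_{v\subseteq u}t^*_v\ge t^*_u$, invoking ``$t^*_u\le t_u$'' does not let you conclude that (count)$\,\times 2^{-m+t_u+|u|}\le\delta$. The paper instead uses the monotonicity $t^*_v+|v|\le t^*_u+|u|$ for $v\subseteq u$ (a result quoted from the companion report), which yields the uniform bound $2^{-m+t^*_{\supp(\bsk)}+|\supp(\bsk)|}\le 2^{-m+t^*_u+|u|}$ matching the stated constraint exactly. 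Without that ingredient, your argument proves at best the theorem with $t_u$ in place of $t^*_u$ in the first displayed inequality, which is a strictly weaker statement since it forces a smaller $N^*_m$.

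Second, the counting step is not correct as written. Summing Theorem~\ref{thm:finiteNbound} over all weights $N'\le M=N^*_m+m-t_u$ gives a count of order $\exp\bigl(\pi\sqrt{|u|M/3}\bigr)$, and there is no general way to ``replace $M$ by $N^*_m$ inside the exponential at the cost of a $\binom{m+|u|}{|u|-1}$ factor'' for that unrestricted set; with $M\approx N^*_m+m$ the resulting overcount wrecks the union bound. The step that saves the argument in the paper is that, by the definition of $t^*_{\supp(\bsk)}$ and the last case of Lemma~\ref{lem:gaincoef}, every $\bsk\ne\bszero$ with $\supp(\bsk)\subseteq u$ and $\Vert\lceil\bskappa\rceil\Vert_1\le m-t_u$ has probability exactly zero (then $C^{\lceil\bskappa\rceil}$ has full rank), so only frequencies with $\Vert\lceil\bskappa\rceil\Vert_1> m-t_u$ need to be counted; Corollary~\ref{cor:finiteNbound2} with $s=|u|$ and $R=m-t_u$ bounds that restricted count by $\binom{m+|u|}{|u|-1}N^*_m\exp\bigl(\pi\sqrt{|u|N^*_m/3}\bigr)$, and its hypothesis $R\ge 2\sqrt{3(|u|-1)N}/\pi$ is precisely the theorem's second constraint. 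Your proposal assigns that constraint a different purpose (keeping $\Vert\lceil\bskappa\rceil\Vert_1\le m$ to avoid the $2^{-m}$ branch of Lemma~\ref{lem:gaincoef}), which is not its role, and the zero-probability observation --- the actual reason the exponential carries only $N^*_m$ --- is absent, so the key quantitative step is missing.
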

\begin{proof} 
It is shown in Section 5 of \cite{pan:owen:2021:tr} that $t_v^*+|v|\leq t_u^*+|u|$ if $v\subseteq u$. So for $\bsk\ne\bszero$ with $\supp(\bsk)\subseteq u$, 
\begin{equation}\label{eqn:usubsetbound}
    \Pr\biggl(\,\sum_{j=1}^s \vec{k{}}_j^\tran M_j C_j=\bszero\biggr)\leq 2^{-m+t^*_{\supp(\bsk)}+|\supp(\bsk)|}\leq 2^{-m+t^*_u+|u|}.
\end{equation}
Now by the definition of $t^*_u$, $C^{\lceil\bskappa\rceil}$ has full rank if $\Vert\lceil\bskappa\rceil\Vert_1\leq m-t^*_{\supp(\bsk)}$, so $\Pr\big(\sum_{j=1}^s \vec{k{}}_j^\tran M_j C_j=\bszero\big)=0$. 
Let
$$
\natu_0^u = \{ \bsk\in\natu_*^s \mid  \supp(\bsk)\subseteq u\}
$$
By choosing $s=|u|$ and $R=m-t_u$ in Corollary~\ref{cor:finiteNbound2} from the appendix, we further get 
\begin{multline*}
    |\{\bsk\in\natu_0^u\mid \Vert\bskappa\Vert_1\leq N^*_m+m-t_u, \Vert\lceil\bskappa\rceil\Vert_1> m-t_u\}|\\
    <{m+|u|\choose |u|-1}N^*_m\exp\Bigl(\pi\sqrt{\frac{|u|N^*_m}{3}}\Bigr).
\end{multline*}
After taking a union bound over
all $\bsk$ in the above set, we finally get
\begin{align*}
    &\Pr\Big(\sum_{j=1}^s \vec{k{}}_j^\tran M_j C_j=\bszero \text{ for some } \bsk\ne\bszero \text{ with } \supp(\bsk)\subseteq u, \ \Vert\bskappa\Vert_1\leq N^*_m+m-t_u\Big)\\
    &\leq {m+|u|\choose |u|-1}N^*_m\exp\Bigl(\pi\sqrt{\frac{|u|N^*_m}{3}}\Bigr)2^{-m+t^*_u+|u|}\leq \delta.\qedhere
\end{align*}
\end{proof}

To interpret this result, let us consider the setting of Theorem~\ref{thm:superconvergence}. For simplicity, we will replace $f$ by $f_1(\bsx)=\e(f(\bsx)\giv x_j,j\in u)-\mu$ and pretend that the problem is $|u|$-dimensional, which is a useful approximation under our assumption on $f$. In view of Lemma~\ref{lem:Walshcoefficientbound} and equation~\eqref{eqn:conditionalvariance}, one can argue that $\var(\hat{\mu}_\infty\giv \bsM)$ is proportional to $4^{-\Vert\bskappa\Vert_1}$ for the $\bsk$ with the smallest $\Vert\bskappa\Vert_1$ among those satisfying $\sum_{j\in u} \vec{k{}}_j^\tran M_j C_j=\bszero$. This is certainly true in the asymptotic sense, as we have shown in the proof of Theorem~\ref{thm:superconvergence} that $\Vert\bskappa\Vert_0\leq \sqrt{2|u|\Vert\bskappa\Vert_1}$ and the supremum norm of partial derivatives grows no faster than $\Vert\bskappa\Vert_0!$. (More precisely, Lemma~\ref{lem:Walshcoefficientbound} only provides an upper bound on $\hat{f}(\bsk)^2$, but section 3 of \cite{superpolyone}
shows the factor $4^{-\Vert\bskappa\Vert_1}$ is in general necessary.)

By the definition of $t_u$, there exists a set of $\bsk\in\natu_*^s$ 
such that $C^{\lceil\bskappa\rceil}$ is rank-deficient and $\Vert\lceil\bskappa\rceil\Vert_1=m-t_u+1$. It is also true that $C^{\lceil\bskappa\rceil-\bsone\{\supp(\bsk)\}}$ has full rank, because otherwise $t_u$ would be even larger. Hence $\rank (C^{\lceil\bskappa\rceil-\bsone\{\supp(\bsk)\}})=\Vert\lceil\bskappa\rceil\Vert_1-|\supp(\bsk)|\leq m-t_u $ and $\Pr\bigl(\sum_{j\in u} \vec{k{}}_j^\tran M_j C_j=\bszero\bigr)\geq 2^{-m+t_u}$ 
from the second case in Lemma~\ref{lem:gaincoef}. 
On the other hand, if we condition on the 
event $\ca$ specified by Theorem~\ref{thm:superconvergence}, the 
smallest $\Vert\bskappa\Vert_1$ for which $\sum_{j\in u} \vec{k{}}_j^\tran M_j C_j=\bszero$ is 
possible is $N^*_m+m-t_u+1$ and the corresponding probability is no more 
than $2^{-m+t^*_u+|u|}$ according to equation~\eqref{eqn:usubsetbound}. So 
roughly speaking, $\var(\hat{\mu}_\infty\giv \ca^c)$ is a factor of $4^{-N^*_m}$ 
smaller than $\var(\hat{\mu}_\infty)$. In view of our previous criterion 
$\var(\hat{\mu}_\infty\giv \ca^c)$ needs to be much smaller than 
$\var(\hat{\mu}_\infty)/m$, we see that with a proper choice on the number 
of replicates, median-of-means can significantly outperform mean-of-means 
when $N^*_m\gg \log m$.

\begin{remark}
One can easily generalize the above discussion to cases where $f$ can be approximated by multiple low-dimensional functions. For instance, suppose $f$ has effective dimension $d$ in the superposition sense \cite{cafmowen}, namely $f\approx \sum_{u\subseteq 1{:}s, |u|\leq d}f_u$ where $f_u$ is the ANOVA term corresponding to subset $u$. We can define $t_d=\max_{u:|u|= d}t_u$ and $T_d=\max_{u:|u|= d}t^*_u+|u|$. By applying the above theorem to each of the ${s\choose d}$ size-$d$ subsets of $1{:}s$, we get
$$\Pr\biggl(\,\sum_{j=1}^s \vec{k{}}_j^\tran M_j C_j=\bszero \text{ for some } \bsk\ne\bszero \text{ with } |\supp(\bsk)|\leq d, \ \Vert\bskappa\Vert_1\leq N^*_m+m-t_d\biggr)\leq \delta$$
where $N^*_m$ is the largest integer $N$ satisfying
\begin{align*}
&2^{T_d}{s\choose d}{m +d \choose d-1}N\exp\Bigl(\pi\sqrt{\frac{dN}{3}}\Bigr)\leq \delta 2^m
\quad\text{and}\quad
\sqrt{3(d-1)N}\leq \frac{\pi}{2}(m-t_d).
\end{align*}
Again when $N^*_m\gg \log m$, we expect to see median-of-means outperform mean-of-means.
\end{remark}

\section{Discussion}\label{sec:discussion}

We have shown that a median-of-means strategy
based on scrambled $(t,m,s)$-nets in base $2$
can attain superpolynomial accuracy for integration
of analytic functions on $[0,1]^s$.  The main
nets we have in mind are those that arise as
the first $2^m$ points of a Sobol' sequence.
The superpolynomial rate comes with a dimension
effect that has lesser impact when the integrand
is dominated by low dimensional ANOVA components.

We have not shown that the method adapts to
lesser levels of smoothness of the integrand.
That is known to hold for $s=1$ from \cite{superpolyone}. It therefore also holds
for additive functions on $[0,1]^s$ with a rate
given by the worst smoothness of any of the 
summands.  We do not know the extent of adaptation
for more general functions.

It remains to quantify the uncertainty in
the median-of-means estimate using
the sample data.  For the mean-of-means we can
get an unbiased estimate of the variance
of the combined estimate.  
There is a central limit theorem (CLT) by Loh \cite{loh:2003} for scrambled nets as $n\to\infty$
but it only applies to nested uniform
scrambling from \cite{rtms} and is only proved
for $t=0$.
There is recent work by Nakayama and Tuffin \cite{naka:tuff:2021}
that describes CLTs for the mean-of-means
over scrambled nets as the number of replicates increases.  

For the median-of-means, things are more
complicated.  We can use nonparametric
statistical methods to get a confidence
interval for the median of $\hat\mu_{\infty,r}$
over all scrambles, but that is not the
same quantity as $\mu=\e(\hat\mu_\infty)$
and it generally depends on $m$.
There are confidence intervals for the
median-of-means (see e.g., \cite{devr:etal:2016}) but in our setting those would have width proportional to
${\var(\hat\mu_{\infty,r})^{1/2}}$.
That standard deviation does not decrease at a super-polynomial rate and so the confidence intervals would not
reflect the increased precision that comes from
using the median-of-means.
The median-of-means works so well for random
linear scrambling because that estimate is usually
very accurate apart from outliers that raise its
variance.  The presence of outliers implies that
the convergence to the Gaussian distribution
will be slow for the mean-of-means with the
random linear scrambles we study here.

This upper bound on the error has the same rate that we would
get in applying a one dimensional rule with error
$O(n^{-c\log_2(n)})$ in an $s$-fold product.
However, an $s$-fold product rule allows no nontrivial
sample sizes below $2^s$ which may be far too large to use
and still ineffective.
It is also not clear whether there would need to be $(2r-1)^s$-fold
computation in a product rule whose factors involve
medians of means.
Digital nets exist for sample sizes $2^m$ for $m\ge0$ so
we can get this rate along a practically usable sequence
of sample sizes and benefit from a good convergence
rate on the low dimensional ANOVA or other components.
The situation is similar to that in \cite{he2016extensible}
where the optimal rate under Lipschitz continuity is attained
by a grid but also by sampling along a Hilbert space-filling
curve.

\bibliographystyle{plain}
\bibliography{qmc}

\appendix
\section*{Appendix}

Here we prove some combinatorial results 
that our main theorem depends on.
We use some results from analytic
combinatorics.  Some of the standard notation
used there conflicts with that in quasi-Monte
Carlo.  For instance, both literatures study
a function denoted by $f$. Rather than change their notation to
avoid duplications, we proceed with the understanding
that some symbols have a different meaning in
this appendix than they have
in the main body of the paper. The uses
in the two settings are distinct.

We use $Q[N]$ to denote the coefficient of $x^N$ in the generating function $Q(x)$. 
We refer the reader to \cite{flaj:sedg:2009} for 
background on generating functions.

We use the bijection from the main
body of the paper between $\natu_0$ and the
set of finite cardinality subsets of 
$\natu$, denoted by $\caln$. 
Recall that for $k\in\natu_0$, we write
$k=\sum_{\ell=1}^\infty a_\ell 2^{\ell-1}$
for bits $a_\ell\in\{0,1\}$, and we set
$\kappa=\{\ell\in \natu\mid a_\ell=1\}\in\caln$.
Clearly the mapping between $k$ and $\kappa$
is a bijection between $\natu_0$ and  $\caln$. 
We extend this mapping to a bijection between
$\bsk\in \natu_0^s$ and $\bskappa\in\caln^s$ componentwise.
Therefore, combinatorial problems about $\bsk\in\natu_0^s$ can be translated into equivalent problems about $\bskappa\in\caln^s$. 

We will need a theorem of Meinardus \cite{mein:1954}.
We state the version from \cite{gran:star:erli:2008},
using the Gamma function $\Gamma(\cdot)$ and Riemann's
zeta function $\zeta(\cdot)$.
For $b_n\ge0$ let  
\begin{align}
    f(z) = \sum_{n=0}^\infty c_nz^n = \prod_{n=1}^\infty(1-z^n)^{-b_n}
\end{align}
for complex $z$ with $|z|<1$.
Meinardus' theorem will give an asymptotic expression for $c_n$.
Let
$$
D(z) = \sum_{n=1}^\infty b_nn^{-z},\quad z=\sigma +it
\quad\text{and}\quad
G(z) = \sum_{n=1}^\infty b_nz^n,\quad |z|<1.
$$
be Dirichlet and power series, respectively, for $b_n$.
\begin{theorem}[Meinardus]
  Let $b_n\ge0$ for $n\ge1$ satisfy these conditions:
  \begin{compactenum}
  \item  The Dirichlet series $D(z)$ converges in the half-plane
    $\sigma>r>0$
    and there is a constant $C_0\in(0,1]$ such that $D(z)$ for $z=\sigma+it$
    has an analytic continuation to the half-plane $\ch = \{z\mid
    \sigma\ge -C_0\}$ on which it is analytic except for a simple
    pole at $z=r$ with residue $A>0$.
  \item There is a constant $C_1>0$ such that $D(\sigma+it) = O(|t|^{C_1})$
    as $t\to\infty$ uniformly in $\sigma\ge-C_0$.
  \item There are constants $C_2>0$ and $\epsilon>0$ such that
    $g(\tau)=G(\exp(-\tau))$ for $\tau = \delta + 2\pi i\alpha$ with
    $\delta>0$
    and $\alpha\in\real$ satisfies
$\re(g(\tau))-g(\delta) \le -C_2\delta^{-\epsilon}$ for 
$|\arg(\tau)|>\pi/4$,
$0 \ne |\alpha|\le 1/2$ for small enough $\delta$.
  \end{compactenum}
  Then as $n\to\infty$,
  \begin{equation}\label{eqn:cn}
        c_n \sim C^{(1)} n^{\gamma_1}\exp\Bigl(
n^{r/(r+1)}\Bigl(1+\frac1r\Bigr)\bigl( A\Gamma(r+1)\zeta(r+1)\bigr)^{1/(r+1)}
  \Bigr)
  \end{equation}
  where 
  $$
\gamma_1 = \frac{2D(0)-2-r}{2(1+r)}
  $$
  and
  $$
C^{(1)} = e^{D'(0)}\bigl(2\pi(1+r)\bigr)^{-1/2}\bigl( A\Gamma(r+1)\zeta(r+1)\bigr)^{\gamma_2}
  $$
for
  $$
\gamma_2 = \frac{1-2D(0)}{2(1+r)}.
  $$
\end{theorem}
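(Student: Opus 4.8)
The plan is to recover $c_n$ from $f$ via Cauchy's integral formula and to evaluate the resulting contour integral by the saddle-point (circle) method, feeding in the behaviour of $f$ near the unit circle through a Mellin-transform analysis. First I would write
\[
c_n = \frac{1}{2\pi i}\oint \frac{f(z)}{z^{n+1}}\rd z
= e^{n\delta}\int_{-1/2}^{1/2} f(e^{-\tau})\,e^{2\pi i n\alpha}\rd\alpha ,
\qquad \tau = \delta + 2\pi i\alpha ,
\]
with $\delta>0$ a free parameter fixed later. The crux is an asymptotic expansion of $\log f(e^{-\tau})$ as $\tau\to 0$. Writing $\log f(e^{-\tau}) = \sum_{n\ge1} b_n\sum_{k\ge1} k^{-1}e^{-kn\tau}$ and applying Mellin inversion gives, for $\sigma_0>r$,
\[
\log f(e^{-\tau}) = \frac{1}{2\pi i}\int_{\sigma_0-i\infty}^{\sigma_0+i\infty}\Gamma(s)\,\zeta(s+1)\,D(s)\,\tau^{-s}\rd s .
\]
Hypothesis~(1) lets me shift the contour to $\re(s)=-C_0$, crossing the simple pole of $D$ at $s=r$ (residue $A$) and the double pole of $\Gamma(s)\zeta(s+1)$ at $s=0$, while hypothesis~(2) bounds the shifted integral; the outcome is
\[
\log f(e^{-\tau}) = A\,\Gamma(r)\,\zeta(r+1)\,\tau^{-r} + D'(0) - D(0)\log\tau + O(\tau^{C_0}) .
\]

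Next I would split the $\alpha$-integral at $|\alpha|=\alpha_0$ for a suitable $\alpha_0=\alpha_0(\delta)\to 0$. On the minor arc $\alpha_0<|\alpha|\le1/2$, hypothesis~(3) — via the standard comparison of $\log f$ with $G$ — forces $|f(e^{-\tau})|\le f(e^{-\delta})\exp(-c\,\delta^{-\epsilon})$ for some $c>0$, so that range contributes less than the main term by a factor $\exp(-c\,\delta^{-\epsilon})$ and is negligible. On the major arc I insert the expansion above, expand to second order in $\alpha$ about $\alpha=0$, and choose $\delta$ as the saddle point, i.e.\ $\delta$ solving $n = A\,\Gamma(r+1)\,\zeta(r+1)\,\delta^{-r-1}$, so $\delta = (A\,\Gamma(r+1)\,\zeta(r+1)/n)^{1/(r+1)}$; this kills the linear term in $\alpha$ and leaves a Gaussian integral contributing the factor $(2\pi(1+r))^{-1/2}(\delta/n)^{1/2}$.

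Finally I would assemble the pieces. The exponent $n\delta + A\,\Gamma(r)\,\zeta(r+1)\,\delta^{-r}$ collapses, using $\Gamma(r+1)=r\Gamma(r)$ and the saddle-point relation, to $(1+\tfrac1r)\bigl(A\,\Gamma(r+1)\,\zeta(r+1)\bigr)^{1/(r+1)}n^{r/(r+1)}$, the exponent appearing in~\eqref{eqn:cn}; the constant $e^{D'(0)}$ comes from the $D'(0)$ term, and the surviving powers of $\delta$ — $\delta^{-D(0)}$ from the double pole together with $(\delta/n)^{1/2}$ from the Gaussian — combine, once $\delta$ is expressed through $n$, into $C^{(1)}n^{\gamma_1}$ with the stated $\gamma_1$ and $\gamma_2$. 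The step I expect to be the main obstacle is the minor-arc estimate: showing that hypothesis~(3) genuinely dominates uniformly over $\alpha_0<|\alpha|\le1/2$, and choosing $\alpha_0$ so that the major-arc Taylor remainder is simultaneously controlled, is where the technical effort concentrates, whereas carrying $\gamma_1$, $\gamma_2$ and $C^{(1)}$ through the Gaussian integral is routine bookkeeping. For the complete argument I would follow \cite{mein:1954} or \cite{gran:star:erli:2008}.
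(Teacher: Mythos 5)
Your proposal is a correct outline, but it proves more than the paper does: the paper offers no proof of this theorem at all --- it simply quotes the statement from Granovsky--Stark--Erlihson, which in turn rests on Meinardus' original 1954 argument. What you sketch is essentially that classical argument (the one reproduced in Chapter 6 of Andrews' book): Cauchy's formula, a Mellin-transform expansion of $\log f(e^{-\tau})$ using hypotheses (1)--(2) to shift the contour past the pole at $s=r$ and the double pole at $s=0$, a saddle-point choice of $\delta$, and a Gaussian evaluation on the major arc. Your bookkeeping checks out: the saddle relation $n=A\Gamma(r+1)\zeta(r+1)\delta^{-r-1}$ does collapse the exponent to $(1+\tfrac1r)(A\Gamma(r+1)\zeta(r+1))^{1/(r+1)}n^{r/(r+1)}$, and the factors $e^{D'(0)}$, $\delta^{-D(0)}$ and $(2\pi(1+r))^{-1/2}(\delta/n)^{1/2}$ do assemble into $C^{(1)}n^{\gamma_1}$ with the stated $\gamma_1,\gamma_2$. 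One caution on the step you flag as the obstacle: hypothesis (3) only covers $|\arg\tau|>\pi/4$, i.e.\ $2\pi|\alpha|\gtrsim\delta$, so a single split at $\alpha_0$ with (3) handling everything beyond it forces $\alpha_0\gtrsim\delta$, which is far wider than the Gaussian width $\sim\sqrt{\delta/n}\sim\delta^{(r+2)/2}$; the intermediate range must instead be controlled by the Mellin expansion itself (valid in the sector $|\arg\tau|\le\pi/4$, where $\re(\tau^{-r})$ drops below $\delta^{-r}$), so the standard proofs use a three-way split rather than your two-way one. With that adjustment, your route is sound; what the paper's citation buys is simply avoiding this technical work, while your reconstruction makes the theorem self-contained.
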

\begin{proof}
  This is the statement from \cite{gran:star:erli:2008}
  based on the result of \cite{mein:1954}.
  \end{proof}

\begin{theorem}\label{thm:growthrate}
For dimension $s\ge1$
$$|\{\bsk\in \natu_*^s\mid \Vert\bskappa\Vert_1=N\}|\sim\frac{C}{N^{3/4}}\exp\Bigl(\pi\sqrt{\frac{sN}{3}}\Bigr)$$
as $N\to\infty$ for some constant $C$ depending on $s$.
\end{theorem}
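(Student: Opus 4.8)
The plan is to identify $|\{\bsk\in\natu_*^s\mid \Vert\bskappa\Vert_1=N\}|$ with a coefficient of an explicit infinite product and then invoke Meinardus' theorem. Under the bijection between $\natu_0$ and $\caln$, each coordinate $\kappa_j$ ranges over all finite subsets of $\natu$, and $\Vert\kappa_j\Vert_1=\sum_{\ell\in\kappa_j}\ell$ is the sum of its (necessarily distinct) elements, so $\kappa_j$ encodes a partition into distinct parts. Choosing $\bskappa$ with $\sum_{j=1}^s\Vert\kappa_j\Vert_1=N$ therefore amounts to splitting total weight $N$ among $s$ ordered partitions into distinct parts, and for $N\ge1$ this forces $\bsk\ne\bszero$. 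Hence
$$|\{\bsk\in\natu_*^s\mid \Vert\bskappa\Vert_1=N\}| = Q[N],\qquad Q(x)=\prod_{n=1}^\infty(1+x^n)^s.$$

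Next I would put $Q$ into Meinardus' normal form. From $1+x^n=(1-x^{2n})/(1-x^n)$,
$$Q(x)=\prod_{n=1}^\infty\frac{(1-x^{2n})^s}{(1-x^n)^s}=\prod_{n\ \text{odd}}(1-x^n)^{-s}=\prod_{n=1}^\infty(1-x^n)^{-b_n},$$
with $b_n=s$ when $n$ is odd and $b_n=0$ when $n$ is even, so all $b_n\ge0$. The associated Dirichlet series is $D(z)=\sum_{n\ge1}b_nn^{-z}=s\sum_{n\ \text{odd}}n^{-z}=s(1-2^{-z})\zeta(z)$, which is analytic on $\mathbb{C}$ except for a simple pole at $z=r=1$ with residue $A=s(1-2^{-1})=s/2>0$. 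The first two conditions of Meinardus' theorem are immediate from the analytic continuation of $\zeta$ and its polynomial growth in vertical strips, the factor $1-2^{-z}$ being entire and bounded there. For the third condition one needs the minor-arc bound on $G(x)=\sum_{n\ge1}b_nx^n=s\,x/(1-x^2)$: writing $\tau=\delta+2\pi i\alpha$ and $g(\tau)=G(e^{-\tau})$, an expansion near $\tau=0$ gives $g(\tau)=\frac{s}{2\tau}(1+O(\tau))$, so on $|\arg\tau|>\pi/4$, $0\ne|\alpha|\le\frac12$ one gets $\re g(\tau)-g(\delta)\le -C_2\delta^{-1}$ for small $\delta$; that is, the third condition holds with $\epsilon=1$.

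With the hypotheses in place, Meinardus' theorem gives $Q[N]\sim C^{(1)}N^{\gamma_1}\exp\bigl(2\sqrt{N}\,(A\Gamma(2)\zeta(2))^{1/2}\bigr)$. Since $D(0)=s(1-2^0)\zeta(0)=0$, we have $\gamma_1=\frac{2D(0)-2-r}{2(1+r)}=-\frac34$, and $A\Gamma(2)\zeta(2)=\frac{s}{2}\cdot1\cdot\frac{\pi^2}{6}=\frac{s\pi^2}{12}$, so $2(A\Gamma(2)\zeta(2))^{1/2}=\pi\sqrt{s/3}$ and the exponent equals $\pi\sqrt{sN/3}$. Taking $C=C^{(1)}=e^{D'(0)}(4\pi)^{-1/2}(s\pi^2/12)^{1/4}$, which depends only on $s$, yields the claim. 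The one genuinely technical point is the verification of the third Meinardus condition (the minor-arc estimate on $G$); everything else is reading off the pole data of $D$ and substituting $r=1$.
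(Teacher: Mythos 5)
Your proposal is correct and follows the same overall route as the paper: identify the count with the coefficient of $x^N$ in $\prod_{n\ge1}(1+x^n)^s$ and apply Meinardus' theorem with $r=1$, $D(0)=0$, residue $s/2$, giving $\gamma_1=-3/4$ and exponent $\pi\sqrt{sN/3}$ (your constant check, e.g.\ $e^{D'(0)}=2^{-1/2}$ recovering $1/(4\cdot3^{1/4})$ at $s=1$, is consistent). Where you differ is in how the hypotheses and parameters are obtained: the paper verifies Meinardus' conditions by citing Andrews' Theorem 6.4 for partitions into parts $\equiv 1 \pmod 2$ and then reverse-engineers $r$ and $D^*(0)$ by matching the known $s=1$ asymptotic from Flajolet--Sedgewick before scaling the Dirichlet series by $s$; you instead compute $D(z)=s(1-2^{-z})\zeta(z)$ explicitly and check the three conditions directly, which is more self-contained and also yields the constant $C$ in closed form rather than implicitly. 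One small caveat on your condition (3): the expansion $g(\tau)\approx s/(2\tau)$ near $\tau=0$ only covers small $|\alpha|$; since $G(e^{-\tau})=s/(2\sinh\tau)$ also blows up as $\alpha\to\pm\tfrac12$ (where $e^{-2\tau}\to1$), you should note that there $\re\,g(\tau)\approx -s/(2\delta)$ is large and negative, so the bound $\re\,g(\tau)-g(\delta)\le -C_2\delta^{-1}$ still holds; a uniform two-line argument via $\re\,g(\tau)-g(\delta)=\frac{s}{2\sinh\delta}\cdot\frac{\sinh^2\delta(\cos\theta-1)-\sin^2\theta}{\sinh^2\delta+\sin^2\theta}$ with $\theta=2\pi\alpha$ closes this cleanly. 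With that detail filled in, your argument is complete.
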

\begin{proof}
For $N>0$, the number of solutions $\bsk$ in $\natu_*^s$ equals the
number in $\natu_0^s$ which we study next.
By using the bijection introduced above, it suffices to bound the
number of $s$-tuples $(\Vert\kappa_1\Vert_1,\dots,\Vert\kappa_s\Vert_1)$ for which $\sum_{j=1}^s \Vert
\kappa_j\Vert_1=N$. When $s=1$, this is equal to the number of ways to
partition an integer $N$ into distinct positive integers. 
From Note I.18 of \cite{flaj:sedg:2009} that
quantity has generating function 
\begin{align}\label{eq:eulers}
    Q(x)=\prod_{n=1}^\infty (1+x^{n})=\prod_{n=1}^\infty \frac{1}{1-x^{2n-1}}.
\end{align}
For general $s$, the generating function is given by $Q^s(x)$, the $s$'th power of $Q(x)$.


Let us denote the Dirichlet series of $Q(x)$ as $D^*(z)$. To prove that $Q(x)$ has coefficients $b_n$ which satisfy the conditions of Meinardus' theorem, we first note that $Q(x)$ is also the generating function for the number of ways to
partition an integer $N$ into possibly repeated odd integers.
This equivalence is a famous result of Euler. The paper by
Bidar \cite{bida:2012} opens with a short
discussion of how Euler's observation 
follows from equation~\eqref{eq:eulers}.

Theorem 6.4 of \cite{andrews1984theory} says that Meinardus' theorem applies to the number of ways to partition an integer $N$ into sums of elements of $H_{k,a}=\{ n\in\natu\mid n=a\tmod k\}$. 
Because $Q(x)$ corresponds to the case $a=1$ and $k=2$, its coefficients satisfy those conditions. Now we can apply Meinardus' theorem to $Q(x)$ and compare equation~\eqref{eqn:cn} to the actual growth rate from Note VII.24 in \cite{flaj:sedg:2009}
$$Q[N]\sim\frac{1}{4\times3^{1/4}N^{3/4}}\exp\Bigl(\pi\sqrt{\frac{N}{3}}\Bigr).$$
Comparing to equation~\eqref{eqn:cn},
we see that the exponent of $N$ within the exponential is ${r}/(1+r)=1/2$ and that $\gamma_1=({2D^*(0)-2-r})/({2(1+r)})=-{3}/{4}$.
Therefore $r=1$ and $D^*(0)=0$.



The Dirichlet series of $Q^s(x)$ has coefficients $s b_n$, so it is equal to $sD^*(z)$. It is straightforward to verify that conditions of Meinardus' theorem still hold if all coefficients are multiplied by a positive constant, so we can apply Meinardus' theorem to $Q^s(x)$ as well. Because $s D^*(z)$ has the same pole as $D^*(z)$ and its residue is $s$ times that of $D^*(z)$, $r$ is still $1$ and $A$ is changed into $sA$. Meinardus’ theorem now gives
$$|\{\bsk\in \natu_*^s\mid \Vert\bskappa\Vert_1=N\}|=Q^s[N]\sim\frac{C}{N^{3/4}}\exp\Bigl(\pi\sqrt{\frac{sN}{3}}\Bigr)$$
for some constant $C$ depending on $s$.
\end{proof}

\begin{corollary}\label{cor:growthrate}
Let $\lambda=3\log(2)^2/\pi^2$. Then
   $$ \Bigl|\Bigl\{\bsk\in \natu_*^s\bigm| \Vert\bskappa\Vert_1\leq \frac{\lambda m^2}{s}\Bigr\}\Bigr|\sim C\frac{2^m}{\sqrt{m}}$$
   as $m\to\infty$  for some constant $C$ depending on $s$.
\end{corollary}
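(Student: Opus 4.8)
The plan is to reduce the corollary to an estimate of a partial sum of the counts supplied by Theorem~\ref{thm:growthrate}. Write $a_N = |\{\bsk\in\natu_*^s\mid \Vert\bskappa\Vert_1 = N\}|$ and $M = M_m = \lfloor \lambda m^2/s\rfloor$, which tends to $\infty$ with $m$. Since every $\bsk\in\natu_*^s$ has $\Vert\bskappa\Vert_1\ge 1$, the cardinality we must estimate is exactly $A(M) = \sum_{N=1}^{M} a_N$. Theorem~\ref{thm:growthrate} gives $a_N = g(N)(1+\epsilon_N)$ with $\epsilon_N\to 0$ and
$$g(N) = \frac{C_0}{N^{3/4}}\exp\Bigl(\pi\sqrt{\tfrac{sN}{3}}\Bigr),\qquad C_0 = C_0(s).$$
Because $g$ is increasing on $[1,\infty)$ for every $s\ge 1$ (one checks $(\log g)'(t) = t^{-1/2}(\tfrac{\pi}{2}\sqrt{s/3} - \tfrac{3}{4}t^{-1/2})>0$ there) and the exponential factor dominates, $A(M)$ will be governed by the terms with $N$ within $O(m)$ of $M$, and this concentration is what ultimately produces the factor $2^m/\sqrt m$.

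I would then carry out three reductions. First, a standard $\eta$-argument removes the multiplicative errors: given $\eta>0$, split $\sum_{N\le M} g(N)\epsilon_N$ at an index $N_0$ beyond which $|\epsilon_N|\le\eta$; the head $\sum_{N<N_0} g(N)$ is a fixed constant, hence $o(\sum_{N\le M} g(N))$, while the remainder is at most $\eta\sum_{N\le M} g(N)$, so $A(M)\sim\sum_{N=1}^M g(N)$. Second, monotonicity of $g$ gives $\int_1^M g(t)\rd t\le \sum_{N=2}^M g(N)\le \int_1^{M+1} g(t)\rd t$, with the discrepancy $O(g(M))$, negligible once the next step shows the integral is much larger. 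Third, evaluate the integral: the substitution $t=u^2$ turns $\int_1^M g(t)\rd t$ into $2C_0\int_1^{\sqrt M} u^{-1/2}e^{c_1 u}\rd u$ with $c_1 = \pi\sqrt{s/3}$, and since an exponentially weighted integrand concentrates at its upper endpoint (differentiate the candidate antiderivative $c_1^{-1}u^{-1/2}e^{c_1 u}$), one gets $\int_1^M g(t)\rd t\sim \tfrac{2C_0}{c_1}M^{-1/4}e^{\pi\sqrt{sM/3}} = g(M)/\beta_M$ with $\beta_M = \tfrac{\pi}{2}\sqrt{s/(3M)}\to 0$; this also confirms $\int_1^M g\gg g(M)$, closing the second reduction.

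Finally I would substitute $M = \lfloor\lambda m^2/s\rfloor = \lambda m^2/s + O(1)$. The role of the value $\lambda = 3\log(2)^2/\pi^2$ is precisely that $\sqrt{\lambda/3} = \log(2)/\pi$, so $\pi\sqrt{sM/3} = m\log 2 + O(1/m)$ and $\exp(\pi\sqrt{sM/3})\sim 2^m$, while $M^{-1/4}\sim (s/\lambda)^{1/4}m^{-1/2}$. Collecting the pieces, $A(M)\sim \tfrac{2C_0}{\pi}\sqrt{3/s}\,(s/\lambda)^{1/4}\, 2^m/\sqrt m$, which is the asserted asymptotic with a constant $C=C(s)$. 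I expect the main obstacle to be bookkeeping rather than any new idea: one must verify carefully that both the low-order terms (those $N$ far below $M$) and the errors $\epsilon_N$ are genuinely $o(2^m/\sqrt m)$ — which holds only because the partial sum is concentrated in a window of width $\Theta(m)$ at the top — and that the integral asymptotics and the monotonicity of $g$ are stated precisely enough to justify the sum-to-integral comparison.
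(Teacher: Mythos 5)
Your proposal is correct and follows essentially the same route as the paper: convert the count into the partial sum of the exact-level counts from Theorem~\ref{thm:growthrate}, show that this partial sum is asymptotic to a constant times $N^{-1/4}\exp\bigl(\pi\sqrt{sN/3}\bigr)$, and then substitute $N=\lfloor\lambda m^2/s\rfloor$, where $\sqrt{\lambda/3}=\log(2)/\pi$ turns the exponential into $2^m$ and $N^{-1/4}$ into a multiple of $m^{-1/2}$. The only difference is that you prove the partial-sum asymptotics in-line (via the $\eta$-argument, monotone sum-to-integral comparison, and endpoint asymptotics of the integral), whereas the paper cites this step, with slight modification, from Appendix~B of \cite{superpolyone}; your constant differs from the one quoted there, but since the corollary only asserts ``some constant $C$ depending on $s$,'' this is immaterial.
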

\begin{proof}
With slight modification, Appendix B of \cite{superpolyone} shows that
$$Q[N]\sim \frac{1}{N^{3/4}}\exp\bigl(\sqrt{\beta N}\bigr)$$
for some $\beta>0$ and also that
$$\sum_{n=1}^N Q[n]\sim \frac{1}{\beta^{1/4}N^{1/4}}\exp\bigl(\sqrt{\beta N}\bigr).$$
Hence
   $$ |\{\bsk\in \natu_*^s\mid \Vert\bskappa\Vert_1\leq N\}|\sim \Bigl(\frac{3}{\pi^2 s}\Bigr)^{1/4}\frac{C}{N^{1/4}}\exp\Bigl(\pi\sqrt{\frac{sN}{3}}\Bigr)$$
where $C$ is the constant from 
Theorem~\ref{thm:growthrate} and $\beta=\pi^2s/3$.
   The conclusion follows once we put in $N=\lfloor\lambda m^2/s\rfloor$ and notice that
   $$\exp\Bigl(\pi\sqrt{\frac{sN}{3}}\Bigr)
   \sim\exp\Bigl(\pi\sqrt{\frac{\lambda m^2}3}\Bigr)
=2^m.$$ 
\end{proof}

Next we derive some finite sample bounds using techniques from \cite{bida:2012}.
Those results give bounds for finite $N$ instead
of asymptotic equivalences as $N\to\infty$.

\begin{theorem}\label{thm:finiteNbound}
For integers $N\ge1$ and $s\ge1$
\begin{align*}
    |\{\bsk\in \natu_*^s\mid \Vert\bskappa\Vert_1=N\}|
    < \frac{\pi\sqrt{s}}{2\sqrt{3N}}\exp\Bigl(\pi\sqrt{\frac{sN}{3}}\Bigr).
\end{align*}
\end{theorem}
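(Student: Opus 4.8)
The plan is to read $a_N:=|\{\bsk\in\natu_*^s\mid \Vert\bskappa\Vert_1=N\}|$ as the coefficient $Q^s[N]$ of the generating function $Q^s(x)=\prod_{n\ge1}(1+x^n)^s=\prod_{n\ge1}(1-x^{2n-1})^{-s}$ introduced in the proof of Theorem~\ref{thm:growthrate}, and to bound $Q^s[N]$ by an effective (non‑asymptotic) version of the saddle‑point method. Throughout, write $x=e^{-\tau}$ for a free parameter $\tau>0$, so that $x\in(0,1)$.

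The first ingredient is that the coefficient sequence is non-decreasing: $Q^s[N]\le Q^s[N+1]$ for every $N\ge0$. Applying Euler's identity componentwise, exactly as in Theorem~\ref{thm:growthrate}, $Q^s[N]$ counts the $s$-tuples $(\pi_1,\dots,\pi_s)$ of partitions into odd parts with $\sum_{j=1}^s|\pi_j|=N$, and appending one part of size $1$ to $\pi_1$ defines an injection from the objects counted by $Q^s[N]$ into those counted by $Q^s[N+1]$. The second ingredient converts this into a one-sided bound: since the coefficients are non-decreasing, $\sum_{M\ge N}Q^s[M]x^M\ge Q^s[N]\sum_{M\ge N}x^M=Q^s[N]x^N/(1-x)$, while non-negativity of all coefficients gives $\sum_{M\ge N}Q^s[M]x^M\le Q^s(x)$; hence $Q^s[N]\le (1-x)x^{-N}Q^s(x)$.

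It then remains to estimate the right-hand side. From the odd-parts product one computes $\log Q^s(e^{-\tau})=\frac{s}{2}\sum_{k\ge1}(k\sinh(k\tau))^{-1}$, and the elementary inequality $\sinh u>u$ for $u>0$ bounds this by $\frac{s}{2\tau}\sum_{k\ge1}k^{-2}=\frac{s\pi^2}{12\tau}$. Combining this with $1-e^{-\tau}<\tau$ gives, for every $\tau>0$,
$$
Q^s[N]<\tau\exp\!\Bigl(N\tau+\frac{s\pi^2}{12\tau}\Bigr).
$$
A one-variable optimization shows that $N\tau+\frac{s\pi^2}{12\tau}$ is minimized at $\tau=\frac{\pi\sqrt s}{2\sqrt{3N}}$ with minimum value $\pi\sqrt{sN/3}$; substituting this choice of $\tau$ yields precisely $Q^s[N]<\frac{\pi\sqrt s}{2\sqrt{3N}}\exp(\pi\sqrt{sN/3})$, which is the assertion.

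The one point requiring thought is where the polynomial prefactor $1/\sqrt N$ comes from: it is the factor $1-x\approx\tau\asymp N^{-1/2}$, which is available only because the coefficient sequence is monotone (allowing the lower bound $\sum_{M\ge N}Q^s[M]x^M\ge Q^s[N]x^N/(1-x)$ rather than the weaker $\ge Q^s[N]x^N$ one gets from non-negativity alone). The remaining steps — the series identity for $\log Q^s(e^{-\tau})$, the inequalities $\sinh u>u$ and $1-e^{-\tau}<\tau$, and the optimization over $\tau$ — are routine.
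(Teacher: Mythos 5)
Your proof is correct and follows the same overall route as the paper's: read the count as the coefficient $Q^s[N]$ of the Euler product, use monotonicity of the coefficients to get the tail bound $Q^s[N]\le(1-x)x^{-N}Q^s(x)$, bound $\log Q^s(e^{-\tau})$ by $s\pi^2/(12\tau)$, apply $1-e^{-\tau}<\tau$, and substitute $\tau=\pi\sqrt{s}/(2\sqrt{3N})$. The difference is that you make two ingredients self-contained where the paper outsources them to Bidar: (i) you prove $Q^s[N]\le Q^s[N+1]$ by a direct injection (append a part equal to $1$ to the first partition in the odd-parts interpretation), whereas the paper invokes Bidar's Lemma 3 on second differences of $Q[n]$, checks small cases, and passes to $Q^s$ via the convolution; and (ii) you derive $\log Q^s(e^{-\tau})=\tfrac{s}{2}\sum_{k\ge1}\bigl(k\sinh(k\tau)\bigr)^{-1}<\tfrac{s\pi^2}{12\tau}$ from the odd-parts product and $\sinh u>u$, whereas the paper cites the dilogarithm estimate from Bidar's Theorem 1. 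Both of your replacements check out (the $\sinh$ identity follows from expanding $-\sum_n\log(1-e^{-(2n-1)\tau})$ and summing the geometric series), and they make the argument elementary and citation-free; the paper's version buys brevity by leaning on the reference. The strictness of the final inequality is preserved in your version since $\sinh u>u$ and $1-e^{-\tau}<\tau$ are strict.
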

\begin{proof} Let $Q(x)$ and $Q^s(x)$ be the same generating functions used in Theorem~\ref{thm:growthrate}. From Lemma 3 of \cite{bida:2012}, $Q[n+1]-Q[n]\geq Q[n]-Q[n-1]$ for $n>3$. Because $Q[1]=Q[2]=1$, and $Q[3]=Q[4]=2$, it follows that $Q[n]$ is nondecreasing over integers $n\ge1$.
Because $Q^s[n]$ is given by a convolution sum of coefficients of $Q(x)$, we also see that $Q^s[n]$ is nondecreasing in n. Therefore for $0\leq x<1$,
$$Q^s(x)\geq 
\sum_{n=N}^\infty Q^s[n]x^n\ge 
Q^s[N]\sum_{n=N}^{ \infty} x^n=Q^s[N]\frac{x^N}{1-x}.$$
Furthermore, from the proof of Theorem 1 in Bidar \cite{bida:2012}, for $x=e^{-u}$ and $u>0$
$$\log(Q^s(e^{-u}))=s\log(Q(e^{-u}))<\frac{\pi^2s}{12u}$$
where we have used positivity of the dilogarithm function
at positive real arguments to obtain this bound from
Bidar's expression.

After combining the above two inequalities, we get
\begin{align*}
   \log(Q^s[N])<Nu+\frac{s\pi^2}{12u}+\log(1-e^{-u}).
\end{align*}
Now we can set $u=\sqrt{(s\pi^2)/(12N)}$ and apply the inequality $1-e^{-u}<u$, after which the above equation becomes
\begin{align*}
    \log(Q^s[N])&<\pi\sqrt{\frac{sN}{3}}+\frac{1}{2}\log\Bigl(\frac{s\pi^2}{12N}\Bigr).
\end{align*}
The conclusion follows once we exponentiate both sides.
\end{proof}

\begin{corollary}\label{cor:finiteNbound}
For integers $N\ge1$ and $s\ge1$,
\begin{align*}
    |\{\bsk\in \natu_*^s\mid \Vert\bskappa\Vert_1\leq N\}|
    <\exp\Bigl(\pi\sqrt{\frac{s(N+1)}{3}}\Bigr).
\end{align*}
\end{corollary}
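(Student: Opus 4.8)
The plan is to sum the per-value bound of Theorem~\ref{thm:finiteNbound} over $n=1,\dots,N$ and recognize the resulting sum as a Riemann sum for the derivative of the target function. First I would note that every $\bsk\in\natu_*^s$ has $\Vert\bskappa\Vert_1\ge1$, so that
$$|\{\bsk\in \natu_*^s\mid \Vert\bskappa\Vert_1\leq N\}| = \sum_{n=1}^N |\{\bsk\in \natu_*^s\mid \Vert\bskappa\Vert_1 = n\}| < \sum_{n=1}^N \frac{\pi\sqrt{s}}{2\sqrt{3n}}\exp\Bigl(\pi\sqrt{\frac{sn}{3}}\Bigr)$$
by Theorem~\ref{thm:finiteNbound}, using $N\ge1$ so that the sum is nonempty and the strict inequalities accumulate.

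The key step is to introduce $g(x) = \exp\bigl(\pi\sqrt{sx/3}\bigr)$ for $x\ge0$ and observe that $g'(x) = \frac{\pi\sqrt{s}}{2\sqrt{3x}}\exp\bigl(\pi\sqrt{sx/3}\bigr)$ is exactly the $n$-th summand above evaluated at $x=n$; hence the upper bound equals $\sum_{n=1}^N g'(n)$. I would then check that $g'$ is increasing on $[1,\infty)$: writing $\log g'(x) = \pi\sqrt{s/3}\,\sqrt{x} - \tfrac12\log x$ plus a constant and differentiating gives $\frac{1}{2\sqrt{x}}\bigl(\pi\sqrt{s/3} - 1/\sqrt{x}\bigr)$, which is positive once $x > 3/(\pi^2 s)$, and since $\pi^2 > 3$ this threshold is below $1$ for every $s\ge1$. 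Monotonicity of $g'$ on $[1,\infty)$ yields $g'(n)\le\int_n^{n+1}g'(x)\,\mrd x$ for each $n\ge1$, so the integrals telescope:
$$\sum_{n=1}^N g'(n)\le\int_1^{N+1}g'(x)\,\mrd x = g(N+1)-g(1) < g(N+1) = \exp\Bigl(\pi\sqrt{\frac{s(N+1)}{3}}\Bigr),$$
where the final strict inequality uses $g(1)=\exp(\pi\sqrt{s/3})>0$. Chaining this with the displayed bound above gives the claimed estimate.

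I do not expect a genuine obstacle here; the argument is essentially a comparison of a sum with an integral. The only place that requires a line of work is verifying that $g'$ is increasing on $[1,\infty)$ (equivalently that $g$ is convex there), and that reduces to the elementary fact $\pi^2>3$. Everything else is bookkeeping, and the strict inequality in the conclusion comes for free from Theorem~\ref{thm:finiteNbound} together with positivity of $g(1)$.
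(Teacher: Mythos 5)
Your proof is correct and takes essentially the same route as the paper: bound each shell count by Theorem~\ref{thm:finiteNbound}, use monotonicity of $x\mapsto\exp\bigl(\pi\sqrt{sx/3}\bigr)/\sqrt{x}$ on $[1,\infty)$ to dominate the sum by $\int_1^{N+1}$ of the same expression, and evaluate that integral exactly, dropping the $-\exp(\pi\sqrt{s/3})$ term. The only difference is that you verify the monotonicity explicitly (reducing it to $\pi^2 s>3$), a fact the paper simply asserts.
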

\begin{proof}
Because $\exp(\pi \sqrt{s x/3})/\sqrt{x}$ is an increasing function over $[1,\infty)$, 
\begin{align*}
    |\{\bsk\in \natu_*^s\mid \Vert\bskappa\Vert_1\leq N\}|&=\sum_{n=1}^N |\{\bsk\in \natu_*^s\mid \Vert\bskappa\Vert_1= n\}|\\
    &< \int_{1}^{N+1} \frac{\pi\sqrt{s}}{2\sqrt{3x}}\exp\Bigl(\pi\sqrt{\frac{sx}{3}}\Bigr)\rd x\\
    &=\exp\Bigl(\pi\sqrt{\frac{s(N+1)}{3}}\Bigr)-\exp\Bigl(\pi\sqrt{\frac{s}{3}}\Bigr)
\end{align*}
and hence the conclusion.
\end{proof}

\begin{corollary}\label{cor:finiteNbound2}
For $R,s,N\in \natu$ satisfying $R\geq  2\sqrt{3(s-1)N}/\pi$,
\begin{align*}
    |\{\bsk\in \natu_*^s\mid \Vert\bskappa\Vert_1\leq N+R, \Vert\lceil\bskappa\rceil\Vert_1> R\}|
    <{R+s\choose s-1}N\exp\Bigl(\pi\sqrt{\frac{sN}{3}}\Bigr).
\end{align*}
\end{corollary}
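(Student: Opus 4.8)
The plan is to peel the largest bit off each coordinate, which collapses the count to a single sum. For $\bsk$ in the set to be counted, put $\ell_j=\lceil\kappa_j\rceil$ and $\kappa_j^\circ=\kappa_j\setminus\{\ell_j\}$ (both empty when $\kappa_j=\emptyset$), so that $\kappa_j^\circ\subseteq\{1,\dots,\ell_j-1\}$ and $\Vert\kappa_j\Vert_1=\ell_j+\Vert\kappa_j^\circ\Vert_1$. Summing over $j$ gives $\Vert\bskappa\Vert_1=L+B$ with $L:=\Vert\lceil\bskappa\rceil\Vert_1=\sum_j\ell_j$ and $B:=\sum_j\Vert\kappa_j^\circ\Vert_1$, so the two hypotheses $\Vert\lceil\bskappa\rceil\Vert_1>R$ and $\Vert\bskappa\Vert_1\le N+R$ become $L\in\{R+1,\dots,N+R\}$ and $B\le N+R-L$. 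Since $\bsk$ is recovered from $((\ell_j)_j,(\kappa_j^\circ)_j)$, the assignment $\bsk\mapsto((\ell_j),(\kappa_j^\circ))$ is injective, and relaxing the constraint $\kappa_j^\circ\subseteq\{1,\dots,\ell_j-1\}$ to the weaker condition that $\kappa_j^\circ$ be an arbitrary finite subset of $\natu$ only enlarges the target. There are $\binom{L+s-1}{s-1}$ tuples $(\ell_1,\dots,\ell_s)\in\natu_0^s$ with $\sum_j\ell_j=L$, and, using the $\natu_0\leftrightarrow\caln$ dictionary, exactly $h(N+R-L)$ relaxed residual tuples with $\sum_j\Vert\kappa_j^\circ\Vert_1\le N+R-L$, where $h(K):=|\{\bsk'\in\natu_0^s\mid\Vert\bskappa'\Vert_1\le K\}|$. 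Hence the quantity to bound is at most $\sum_{L=R+1}^{N+R}\binom{L+s-1}{s-1}h(N+R-L)$.

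Next I would estimate $h$. Corollary~\ref{cor:finiteNbound} bounds $|\{\bsk'\in\natu_*^s\mid\Vert\bskappa'\Vert_1\le K\}|$ by $\exp(\pi\sqrt{s(K+1)/3})$, and its proof in fact gives $\exp(\pi\sqrt{s(K+1)/3})-\exp(\pi\sqrt{s/3})$; since $\exp(\pi\sqrt{s/3})>1$, this absorbs the single all-zero tuple, so $h(K)<\exp(\pi\sqrt{s(K+1)/3})$ for every $K\ge0$. Reindexing the sum above by $K=N+R-L\in\{0,\dots,N-1\}$, the count is therefore strictly below $\sum_{K=0}^{N-1}\phi(K)$, where $\phi(K):=\binom{N+R-K+s-1}{s-1}\exp(\pi\sqrt{s(K+1)/3})$, and note that $\phi(N-1)=\binom{R+s}{s-1}\exp(\pi\sqrt{sN/3})$.

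The heart of the proof is the uniform estimate $\phi(K)\le\binom{R+s}{s-1}\exp(\pi\sqrt{sN/3})$ for each $K\in\{0,\dots,N-1\}$; summing the $N$ resulting terms gives the claimed bound, the strict inequality $h(K)<\exp(\cdot)$ making it strict. Writing $\binom{N+R-K+s-1}{s-1}/\binom{R+s}{s-1}=\prod_{i=1}^{s-1}\frac{N+R-K+i}{R+1+i}\le\bigl(1+\tfrac{N-K-1}{R+2}\bigr)^{s-1}$ and taking logarithms, it suffices to verify $(s-1)\log\bigl(1+\tfrac{N-K-1}{R+2}\bigr)\le\pi\sqrt{s/3}\,(\sqrt N-\sqrt{K+1})$; using $\log(1+x)\le x$ on the left and $\sqrt N-\sqrt{K+1}\ge(N-K-1)/(2\sqrt N)$ on the right reduces this to $R+2\ge 2\sqrt3\,(s-1)\sqrt N/(\pi\sqrt s)$, which follows from the hypothesis $R\ge 2\sqrt3\,\sqrt{s-1}\,\sqrt N/\pi$ because $\sqrt s\,\sqrt{s-1}\ge s-1$. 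I expect this to be the only delicate point: the binomial and the exponential in $\phi(K)$ must be controlled \emph{together}, since bounding them by their separate maxima over $K$ loses a factor of order $(1+\sqrt N)^{s-1}$, and the hypothesis on $R$ is precisely what forces $\phi$ to be dominated by its value $\binom{R+s}{s-1}\exp(\pi\sqrt{sN/3})$ at $K=N-1$. The degenerate cases $s=1$ (empty product, $\phi(K)=\exp(\pi\sqrt{(K+1)/3})$) and $N=1$ ($K=0$ only, $h(0)=1$) are immediate, and everything else is stars-and-bars bookkeeping.
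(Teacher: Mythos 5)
Your proof is correct and takes essentially the same route as the paper's: split on the top-bit vector $\lceil\bskappa\rceil$ (equivalently peel the largest element off each $\kappa_j$), count those vectors by stars and bars, bound the residual count via Corollary~\ref{cor:finiteNbound}, and use the hypothesis $R\ge 2\sqrt{3(s-1)N}/\pi$ to show every summand is dominated by the term with $\Vert\lceil\bskappa\rceil\Vert_1=R+1$, so the sum is at most $N$ times that term. The only cosmetic differences are that you compare each term directly to the extreme one (rather than the paper's consecutive-ratio monotonicity argument) and you explicitly absorb the all-empty residual tuple into the slack of Corollary~\ref{cor:finiteNbound}, a point the paper passes over silently.
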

\begin{proof} 
Recall that we use $\Vert \bsv\Vert_1$ 
for the sum of entries in a vector. 
There are ${n+s-1\choose s-1}$ vectors $\bsv\in\natu_0^s$ with $\Vert \bsv\Vert_1=n$. Hence
\begin{align*} 
     |\{\bsk\in \natu_*^s\mid \Vert\bskappa\Vert_1\leq N, \Vert\lceil\bskappa\rceil\Vert_1> R\}|&=\sum_{\bsv\in \natu_*^s:\Vert \bsv\Vert_1> R} |\{\bsk\in \natu_*^s\mid \Vert\bskappa\Vert_1\leq N, \lceil\bskappa\rceil=\bsv\}|\\
     &\leq \sum_{n=R+1}^N {n+s-1\choose s-1}|\{\bsk\in \natu_*^s\mid \Vert\bskappa\Vert_1\leq N-n\}|\\
     &\leq \sum_{n=R+1}^N {n+s-1\choose s-1}\exp\Bigl(\pi\sqrt{\frac{s(N-n+1)}{3}}\Bigr)
\end{align*}
where the last inequality uses the bound from Corollary~\ref{cor:finiteNbound}. The ratio of the summand with index n to the summand with index $n+1$ is 
\begin{align*}
    &{n+s-1\choose s-1}\Bigm/{n+s\choose s-1}\times \exp\Bigl(\pi\sqrt{\frac{s(N-n+1)}{3}}-\pi\sqrt{\frac{s(N-n)}{3}}\Bigr)\\
    &=\frac{n+1}{n+s}\exp\Biggl(\frac{\pi s/3 }{ \sqrt{\frac{s(N-n+1)}{3}}+\sqrt{\frac{s(N-n)}{3}}
    }\Biggr)\\
    &>\exp\biggl(-\frac{s-1}{n+1}+\frac{\pi\sqrt{s}}{2\sqrt{3(N-R)}}\biggr)
\end{align*}
where the last inequality uses
$n\ge R+1$ and
$(1+x)^{-1}>\exp(-x)$ for
$1+x = (n+1)/(n+s)$, that is
$x=(s-1)/(n+1)$. Let $N'=N-R$. If $n> R\geq  2\sqrt{3(s-1)N'}/\pi$, then the above ratio is larger than $1$ and we know that
$$\sum_{n=R+1}^N {n+s-1\choose s-1}\exp\Bigl(\pi\sqrt{\frac{s(N-n+1)}{3}}\Bigr)< {R+s\choose s-1}N'\exp\Bigl(\pi\sqrt{\frac{sN'}{3}}\Bigr)$$
which implies the conclusion.
\end{proof}
\end{document}